\newtheorem{theorem}{Theorem}[section]
\newtheorem{corollary}[theorem]{Corollary}
\newtheorem{lemma}[theorem]{Lemma}
\newtheorem{proposition}[theorem]{Proposition}
\theoremstyle{definition}
\newtheorem{remark}[theorem]{Remark}
\numberwithin{equation}{section}
\newcommand{\N}{\mathbb{N}}
\newcommand{\R}{\mathbb{R}}
\renewcommand{\Re}{\operatorname{Re}}
\renewcommand{\Im}{\operatorname{Im}}
\newcommand{\I}{\mathrm{i}}
\newcommand{\e}{\mathrm{e}}
\newcommand{\eps}{\varepsilon}
\newcommand{\MP}{\mathcal{P}}
\newcommand{\MN}{\mathcal{N}}
\DeclareMathOperator{\sgn}{sgn}
\DeclareMathOperator{\supp}{supp}
\begin{document}

\title[On zero-density estimates and the PNT in short intervals]{On zero-density estimates and the PNT in short intervals for Beurling generalized numbers}

\author[F. Broucke]{Frederik Broucke}
\thanks{F. Broucke was supported by a postdoctoral fellowship (grant number 12ZZH23N) of the Research Foundation -- Flanders.}

\author[G.~Debruyne]{Gregory Debruyne}
\thanks{During the period this research was carried out G.~Debruyne was partly supported by a postdoctoral fellowship (grant number 3E006818) of the Research Foundation -- Flanders and partly by a postdoctoral fellowship of Ghent University.} 

\address{Department of Mathematics: Analysis, Logic and Discrete Mathematics\\ Ghent University\\ Krijgslaan 281\\ 9000 Gent\\ Belgium}
\email{fabrouck.broucke@ugent.be}
\email{gregory.debruyne@ugent.be}

\subjclass[2020]{11M26, 11M41, 11N05, 11N80}
\keywords{Beurling generalized prime number systems, well-behaved integers, zero-density estimates, Riemann zeta function, PNT in short intervals, mean-value theorem for Dirichlet polynomials, large values estimate, generalized Dirichlet polynomial, zero-detection method}

\begin{abstract} We study the distribution of zeros of zeta functions associated to Beurling generalized prime number systems whose integers are distributed as $N(x) = Ax + O(x^{\theta})$. We obtain in particular
\[
	N(\alpha, T) \ll T^{\frac{c(1-\alpha)}{1-\theta}}\log^{9} T,
\]
for a constant $c$ arbitrarily close to $4$, improving significantly the current state of the art. We also investigate the consequences of the obtained zero-density estimates on the PNT in short intervals. Our proofs crucially rely on an extension of the classical mean-value theorem for Dirichlet polynomials to generalized Dirichlet polynomials.
\end{abstract}

\maketitle

\section{Introduction}

The zeros of the Riemann zeta function occupy a central role in analytic number theory as they are intimately connected to the distribution of the prime numbers. The most famous conjecture in this regard is the \emph{Riemann hypothesis} stating that $\zeta(s)$ does not possess any zeros in the half-plane $\Re s > 1/2$. On the other hand, in many arithmetic applications  appealing to the full strength of the Riemann hypothesis in not necessary. For the PNT in short intervals, it suffices that the half-plane $\Re s > 1/2$ is a \emph{zero-sparse} region, in the sense that the \emph{density hypothesis} holds, that is,
\[
	N(\alpha, T) \ll T^{2(1-\alpha)} (\log T)^{O(1)}, \quad \alpha > 1/2, \quad T \geq 2,
\]
where $N(\alpha, T)$ counts the number of zeros of $\zeta(s)$ to the right of $\alpha$ and up to height $T$:
\[
	N(\alpha,T) = \#\{\rho=\beta+\I \gamma: \zeta(\rho)=0, \,\beta\ge\alpha, \,\abs{\gamma}\le T\}.
\]
In that case the PNT in short intervals holds in the form $\psi(x+h) - \psi(x) \sim h$ for all $h \gg x^{\lambda}$ as $x \rightarrow \infty$ whenever $\lambda > 1/2$. Assuming the Riemann hypothesis would not improve this. Unfortunately, also the density hypothesis remains only a conjecture to this day; the best known range \cite{Bourgain} is currently  $\alpha > 25/32 \approx 0.781$. If one weakens the density hypothesis further to
\begin{equation} 
\label{eq: weakzerodensity}
	N(\alpha, T) \ll T^{c(1-\alpha)} (\log T)^{O(1)}, \quad \alpha > 1-\frac{1}{c}, \quad T \geq 2,
\end{equation}
where $c > 2$, then the PNT in short intervals holds in the range $\lambda > 1-1/c$. The current record for $c$ is $12/5$ \cite{Huxley, Ingham37}.

In this work, we study \eqref{eq: weakzerodensity} and the corresponding application of the PNT in short intervals in the Beurling context where there is in general no additive structure or functional equation. 
A \emph{Beurling generalized prime number system} $(\mathcal{P}, \mathcal{N})$ consists of a sequence of \emph{generalized primes} $\mathcal{P} = (p_{k})_{k}$ with $1 < p_{1} \leq  p_{2} \leq \dots $ and $p_{k}\to \infty$, and the \emph{generalized integers} $\mathcal{N}$ which are formed by taking the multiplicative semigroup generated by the generalized primes and $1$. Arranging the generalized integers in non-decreasing order\footnote{We consider generalized integers to be different if their generalized prime factorization is different, even if they share the same numerical value.}, one obtains the sequence $1 = n_{0} < n_{1}= p_{1} \leq n_{2} \leq \dots$.  One may associate to this system many familiar number-theoretic functions, such as
\[
	\pi_{\mathcal{P}}(x) = \sum_{{p_k} \leq x} 1, \quad N_{\mathcal{P}}(x) = \sum_{n_k \leq x} 1, \quad \zeta_{\mathcal{P}}(s) = \sum_{n_k} \frac{1}{{n}^{s}_k}.
\]
We often omit the subscripts $\mathcal{P}$ and $k$ when there is no risk of confusion. The relationship between these three functions has been the subject of extensive research over the last century. We refer to the monograph \cite{DiamondZhangbook} for a detailed account on the theory of Beurling systems.

In order to guarantee that $\zeta(s)$ has an analytic extension to the left of $\sigma=1$, we will throughout this work assume that there exist some $\theta \in [0,1)$ and some $A>0$ such that  
\begin{equation}
\label{eq: well-behaved}
	N(x) = Ax + O(x^{\theta}).
\end{equation}
We will refer to this condition\footnote{In his treatise Knopfmacher \cite{Knopfmacher} named this condition on $N$ \emph{axiom A}. As a consequence, this name has been featured frequently in the literature. We however prefer to use the name well-behaved integers which Hilberdink popularized and in our opinion better grasps the nature of \eqref{eq: well-behaved}.} by saying that \emph{the integers are well-behaved}, or \emph{$\theta$-well-behaved} if we want to specify the exponent $\theta \in [0,1)$.
The condition \eqref{eq: well-behaved} implies that $\zeta(s) - A/(s-1)$ has analytic continuation to the half-plane $\sigma>\theta$. Even though this condition is quite restrictive, it is satisfied for many interesting multiplicative structures \cite{Knopfmacher}.

It has been known due to the work of Landau on the prime ideal theorem that the zeros of the zeta function associated to a system having well-behaved integers must all lie outside a \emph{de la Vall\'ee Poussin-type} region:
\begin{equation}
\label{eq: zero-free region}
	\zeta(\sigma+\I t )\neq 0 \quad \text{for } \sigma \ge 1- \frac{d}{\log(\,\abs{t}+2)},
\end{equation}
for some suitable positive constant $d$, depending only on $\theta$. In fact, for $\theta\in (1/2,1)$ this zero-free region was shown to be optimal, apart from the precise value of $d$, in the seminal work of Diamond, Montgomery, and Vorhauer \cite{DiamondMontgomeryVorhauer} who constructed a Beurling system whose zeta function has infinitely many zeros on a curve of the form $\sigma = 1- d/\log |t|$.

A finer study of the distribution of zeros of Beurling zeta functions was recently initiated by Sz.\ Gy.\ R\'ev\'esz in \cite{Reveszbasicformulas}. Therein a quantitative version of the following lemma was established (a similar estimate was already obtained by Diamond, Montgomery, and Vorhauer as part of their ``clustering of zeros'' result, \cite[Theorem 2]{DiamondMontgomeryVorhauer}).
\begin{lemma}
\label{lem: local zeroes}
Suppose that $N(x) = Ax + O(x^{\theta})$ for some $A>0$ and $\theta\in[0,1)$. For each $T>2$ and $\alpha > \theta$, the number of zeros $\rho = \beta+\I\gamma$ of the associated zeta function $\zeta$ satisfying $\beta \in [\alpha,1]$, $\abs{\gamma} \in [T, T+1]$ is bounded by $O_{\alpha}(\log T)$.
\end{lemma}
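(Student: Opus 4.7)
The approach is to apply Jensen's inequality to $\zeta(s)$ in a disc centered at a point $s_{0} = \sigma_{0} + \I T$ with $\sigma_{0} > 1$, chosen so the disc both contains the rectangle $[\alpha, 1]\times [T, T+1]$ and stays inside the half-plane $\sigma > \theta$ on which $\zeta(s) - A/(s-1)$ is analytic. The number of zeros in the inner disc is then controlled by the oscillation of $\log\abs{\zeta}$ between the center and a slightly larger concentric circle.

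Two ingredients are needed. For the upper bound on the boundary, partial summation applied to \eqref{eq: well-behaved} yields
\[
\zeta(s) = \frac{As}{s-1} + s \int_{1}^{\infty} \frac{N(x) - Ax}{x^{s+1}} \dif x,
\]
where the tail integral converges absolutely for $\sigma > \theta$ and is $O_{\delta}(1)$ uniformly for $\sigma \geq \theta + \delta$. Hence $\abs{\zeta(\sigma + \I t)} \ll_{\delta} \abs{t}$ on any compact $\sigma$-range inside $\{\sigma \geq \theta + \delta\}$ for $\abs{t}\geq 2$. For the lower bound at the center, the absolutely convergent Euler product gives $\abs{\log \zeta(\sigma_{0} + \I t)} \leq \log \zeta_{\MP}(\sigma_{0})$, so $\abs{\zeta(s_{0})}$ is squeezed between positive constants depending only on $\sigma_{0}$.

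Now fix $\alpha > \theta$ and choose $\sigma_{0} > 1$ large enough that $(\sigma_{0} - \theta)^{2} > (\sigma_{0} - \alpha)^{2} + 4$; since the gap factors as $(\alpha - \theta)(2\sigma_{0} - \alpha - \theta)$ and grows linearly in $\sigma_{0}$, this is achievable by taking $\sigma_{0} \gg 1/(\alpha - \theta)$. Put $r = \sqrt{(\sigma_{0} - \alpha)^{2} + 1}$ and $R = \sigma_{0} - \theta - \delta$ for a small $\delta = \delta(\alpha) > 0$ so that $R > r$. Then $\abs{s - s_{0}} \leq r$ contains every zero of $\zeta$ in the target rectangle, the circle $\abs{s - s_{0}} = R$ lies in a vertical strip contained in $\{\sigma \geq \theta + \delta\}$, and the full disc avoids the pole at $s=1$ since $T \geq 2$. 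Jensen's formula then gives
\[
n(r) \log \frac{R}{r} \leq \frac{1}{2\pi}\int_{0}^{2\pi} \log \abs{\zeta(s_{0} + R\e^{\I\vphi})} \dif \vphi - \log\abs{\zeta(s_{0})} \ll_{\alpha} \log T,
\]
and since $\log(R/r)$ depends only on $\alpha$ and $\theta$, this yields $n(r) = O_{\alpha}(\log T)$, which dominates the zero count in the rectangle.

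The only delicate point is the geometric one: the half-width $\sigma_{0} - \theta$ of the analyticity strip must exceed the diagonal distance $\sqrt{(\sigma_{0} - \alpha)^{2} + 1}$ from $s_{0}$ to the far corner of the box. This forces $\sigma_{0}$ to grow like $1/(\alpha - \theta)$ as $\alpha \downarrow \theta$, and it is precisely this dependence that produces the $\alpha$-dependent implicit constant in the statement; the remainder of the argument is the standard Jensen bookkeeping.
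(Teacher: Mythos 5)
Your overall strategy---Jensen's formula on a disc centred to the right of the pole, with a boundary bound from the analytic continuation of $\zeta$ past $\sigma=1$ and a lower bound at the centre from the Euler product---is the standard and correct one (it is essentially the argument behind the cited quantitative results of Diamond--Montgomery--Vorhauer and R\'ev\'esz), and the Jensen bookkeeping and the final bound $n(r)\log(R/r)\ll_{\alpha}\log T$ are sound. There is, however, a genuine gap in the geometry: the assertion that the disc avoids the pole at $s=1$ \emph{because} $T\ge 2$ is false once $\alpha$ is close to $\theta$. You take $\sigma_{0}\gg 1/(\alpha-\theta)$, so the radius $R=\sigma_{0}-\theta-\delta$ grows without bound as $\alpha\downarrow\theta$, whereas the squared distance from $s_{0}=\sigma_{0}+\I T$ to $1$ is $(\sigma_{0}-1)^{2}+T^{2}$. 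The pole lies inside the disc exactly when
\[
T^{2} < R^{2}-(\sigma_{0}-1)^{2} = (1-\theta-\delta)\bigl(2\sigma_{0}-1-\theta-\delta\bigr),
\]
and the right-hand side is $\asymp (1-\theta)\sigma_{0}$, so for $T$ near $2$ this \emph{is} violated whenever $\sigma_{0}$ is large: e.g.\ $\theta=1/2$, $\alpha=0.51$ forces $\sigma_{0}\approx 201$, giving $R\approx 200.5$ while $\abs{1-s_{0}}=\sqrt{200^{2}+4}\approx 200.01<R$. This also undermines the boundary estimate, since $\abs{\zeta(\sigma+\I t)}\ll_{\delta}\abs{t}$ tacitly requires $\abs{s-1}\gg 1$ on the circle.

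The gap is easily repaired, but must be repaired. The tidiest fix is to apply Jensen to $(s-1)\zeta(s)$, holomorphic throughout $\sigma>\theta$: the boundary bound becomes $\abs{(s-1)\zeta(s)}\ll_{\alpha}T^{2}$, still $O_{\alpha}(\log T)$ after taking logarithms, and at the centre $\log\abs{(s_{0}-1)\zeta(s_{0})}=\log\abs{s_{0}-1}+\log\abs{\zeta(s_{0})}\ge -\log\zeta_{\MP}(\sigma_{0})$ since $\abs{s_{0}-1}\ge T>1$, so the Jensen inequality goes through unchanged. Alternatively, note that for $2<T\le T_{0}(\alpha)$ the number of zeros of $\zeta$ in the compact region $[\alpha,1]\times\{\abs{t}\le T_{0}(\alpha)+1\}$ is a finite quantity depending only on $\alpha$ and the system, hence $O_{\alpha}(1)\ll_{\alpha}\log T$, and run your disc argument only for $T>T_{0}(\alpha)$ chosen large enough to keep the pole outside. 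With either patch the remainder of your proof is correct.
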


R\'ev\'esz's result implies in particular that for each $\alpha>\theta$ we have $N(\alpha, T) \ll_{\alpha} T\log T$.

The next natural question in the study of $N(\alpha,T)$ is whether it is possible to obtain a zero-density estimate. This was answered affirmatively by R\'ev\'esz in \cite{Reveszzerodensity} under two conditions: the first (rather restrictive) condition that every Beurling integer must be a classical integer, that is $n_k \in \N$ for each $k$, and the second rather mild condition, saying that the number of repeated values in the generalized integers ($n_{k} = n_{k+1} = \dotso = n_{k+l}$) is not too large on average (in \cite{Reveszzerodensity}, this is referred to as an ``average Ramanujan condition''). The first condition is especially unsatisfactory as it leaves open the possibility that the validity of zero-density estimates still relies on the (additively) very well-structured integers.

\medskip


In this paper, we show that the two extra assumptions besides having well-behaved integers are superfluous for obtaining zero-density estimates. We also improve upon the constant $c = (6-2\theta)/(1-\theta)$ that R\'ev\'esz achieved for \eqref{eq: weakzerodensity} even though we do not assume his additional hypotheses. In our result, $\alpha$ appears only in the form $\frac{1-\alpha}{1-\theta}$, so that it is more natural to express $\alpha$ as a convex combination of $\theta$ and $1$: $\alpha = (1-\mu)\theta+ \mu$. Then $\frac{1-\alpha}{1-\theta} = 1-\mu$. In the zero-density estimate the function $c(\mu)$ occurs, which is given by
\begin{equation}
\label{eq: c(mu)}
	c(\mu) \coloneqq \frac{4\mu}{2\mu^{2}-3\mu+2}.
\end{equation}
It is increasing on $[2/3, 1]$, with $c(2/3) = 3$ and $c(1)=4$. Our main result is the following theorem.
\begin{theorem}
\label{th: zero-density}
Let $(\MP,\MN)$ be a Beurling generalized number system with $\theta$-well-behaved integers. For every $\eps>0$ and $\delta>0$, there exists a constant $T_{0} = T_{0}(\eps, \delta)$, such that uniformly for $T \ge T_{0}$, $\mu \ge 2/3 + \delta$ we have
\begin{equation}
\label{eq: zero-density estimate}
	N(\alpha,T) = N\bigl((1-\mu)\theta + \mu, T\bigr) \ll T^{(c(\mu)+\eps)(1-\mu)}(\log T)^{9},
\end{equation}
where $c(\mu)$ is given by \eqref{eq: c(mu)} and the implicit constant does not depend on $\eps$ or $\delta$.
\end{theorem}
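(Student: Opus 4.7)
The plan is to follow the classical zero-detection strategy of Ingham--Montgomery--Huxley, with the key new ingredient being a mean-value theorem for generalized Dirichlet polynomials $D(s)=\sum_{n_{k}\le N}a_{n_{k}}n_{k}^{-s}$ supported on Beurling integers. Starting from \eqref{eq: well-behaved} and carrying out partial summation on the off-diagonal contributions to $\int_{-T}^{T}\abs{D(it)}^{2}\,\mathrm{d}t$, one expects an estimate of the form
\[
	\int_{-T}^{T}\abs{D(it)}^{2}\,\mathrm{d}t\;=\;\bigl(2AT+O(N+T^{\theta})\bigr)\sum_{n_{k}}\abs{a_{n_{k}}}^{2}\cdot(\log N)^{O(1)},
\]
the extra $T^{\theta}$ term being the unavoidable feature of the Beurling setting and the source of the ``well-behavedness'' dependence in the final exponent.

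Next I would convert zeros into large values of Dirichlet polynomials. Let $M_{X}(s)=\sum_{n_{k}\le X}\mu_{\MP}(n_{k})n_{k}^{-s}$ be a M\"obius-type mollifier, form $\zeta_{\MP}(s)M_{X}(s)-1=\sum_{n_{k}>X}c_{n_{k}}n_{k}^{-s}$, and convolve against a smooth kernel to truncate the tail at some $Y=T^{O(1)}$, so that at each zero $\rho$ with $\beta\ge\alpha$ one obtains an identity
\[
	1\;=\;-\sum_{X<n_{k}\le Y}\widetilde{c}_{n_{k}}\,n_{k}^{-\rho}\;+\;(\text{negligible error}).
\]
A dyadic pigeonhole selects a scale $N\in[X,Y]$ and a subset of zeros on which the polynomial $D_{N}(s)=\sum_{N<n_{k}\le 2N}\widetilde{c}_{n_{k}}n_{k}^{-s}$ satisfies $\abs{D_{N}(\rho)}\gg(\log T)^{-1}$. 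Invoking Lemma~\ref{lem: local zeroes} to reduce (at a cost of a factor $\log T$) to a $1$-separated set of such zeros, the problem is converted into bounding the number $R$ of well-spaced points $s_{j}$ with $\Re s_{j}\ge\alpha$, $\abs{\Im s_{j}}\le T$ and $\abs{D_{N}(s_{j})}\ge V$, where $V\asymp(\log T)^{-1}$ and $\norm{a}_{2}^{2}=\sum\abs{\widetilde{c}_{n_{k}}}^{2}\ll N(\log N)^{O(1)}$.

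Two large-values bounds then feed the final optimisation. The first is immediate from the generalized mean-value theorem, giving $RV^{2}\ll (N+T^{\theta})\norm{a}_{2}^{2}N^{-2\alpha}(\log T)^{O(1)}$. The second is obtained from a Hal\'asz--Montgomery argument: expanding $\sum_{j}\overline{c_{j}}D_{N}(s_{j})$ via Cauchy--Schwarz and estimating the resulting diagonal polynomial of length $\asymp N$ on $R$ shifts by a second application of the mean-value theorem yields a bound of the shape $RV^{6}\ll (N+T^{\theta})^{2}\norm{a}_{2}^{6}N^{-6\alpha+1}(\log T)^{O(1)}$ (or a close variant, after inserting the diagonal Gallagher-type weight). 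Taking the minimum of the two bounds and optimising over $N\in[T^{\theta},T]$ with $\alpha=(1-\mu)\theta+\mu$, so that all dependence on $\theta$ is absorbed into $N^{1-2\alpha}=N^{1-2\mu}N^{-2(1-\mu)\theta}$, produces after some algebra the exponent $c(\mu)=4\mu/(2\mu^{2}-3\mu+2)$; the two endpoints $c(2/3)=3$ and $c(1)=4$ correspond to the two regimes where the first resp.\ the second bound dominates over the entire range of $N$.

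The principal obstacle is the mean-value theorem itself: since the characters $n_{k}^{-it}$ enjoy no orthogonality in the Beurling setting, one must squeeze the main term $2AT\sum\abs{a_{n_{k}}}^{2}$ together with the correct error of order $O(N+T^{\theta})$ out of \eqref{eq: well-behaved} alone, through a careful handling of the off-diagonal double sum and a truncation of its Mellin transform. Once this is in place the rest is a (somewhat delicate) optimisation; it is moreover precisely the $T^{\theta}$ contribution that forces the hypothesis $\mu\ge 2/3+\delta$, since for smaller $\mu$ the balancing scale $N$ would fall below $T^{\theta}$, where the mean-value theorem degenerates and the Hal\'asz--Montgomery amplification loses its advantage over the trivial bound.
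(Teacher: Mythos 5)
Your proposal diverges from the paper's argument in two essential places, and at least one of these is a genuine gap.

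First, the form you guess for the mean-value theorem is not what holds in the Beurling setting and is in fact too optimistic. Because generalized integers may cluster (or even share numerical values), the diagonal-plus-near-diagonal contribution cannot be controlled by $\sum_{k}\abs{a_{k}}^{2}$ alone; the paper's Theorem~\ref{th: MVT} necessarily weights each $\abs{a_{k}}^{2}$ by the local counting function $\chi(n_{k},\eta N)$, giving $\int\abs{S(t)}^{2}\ll(T+1/\eta)\sum_{k}\chi(n_{k},\eta N)\abs{a_{k}}^{2}$. Under $\theta$-well-behavedness, taking $\eta\asymp N^{\theta-1}$ yields an error of size $TN^{\theta}+N$, not $T+N+T^{\theta}$. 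So the term you single out as the ``unavoidable feature'' ($T^{\theta}$) is not what actually occurs, and the clean asymptotic $\bigl(2AT+O(N+T^{\theta})\bigr)\sum\abs{a_{k}}^{2}$ does not hold for general Beurling systems.

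Second and more seriously, the Hal\'asz--Montgomery large-values estimate does \emph{not} play a role in the proof, and combining it with the mean-value theorem does not produce $c(\mu)$. As the paper shows in the Remark at the end of Section~\ref{sec: proofzero}, in the Beurling setting the Hal\'asz--Montgomery bound (even enhanced with Huxley's subdivision trick) gives $R_{l}\ll(N^{2-2\alpha}+TN^{3+\theta-4\alpha})T^{\eps}$, which is strictly \emph{inferior}, for each $\nu$, to the bound $R_{l}\ll(\log T)^{6}(TN^{1+\theta-2\alpha}+N^{2-2\alpha})$ coming from the mean-value theorem alone. Thus there is no ``min of two bounds'' to optimize: the second bound never dominates. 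The actual mechanism behind $c(\mu)$ is entirely different: one approximates $\zeta(s)$ by a partial sum of length $T^{\nu/(1-\theta)}$ with a $\nu$-dependent error (Lemma~\ref{lem: approximation zeta}), and the admissible range of $\nu$ is constrained by the requirement that this approximation error stay below $1/2$ at a zero of height $\asymp T$ and real part $\ge\alpha$ — this is what produces conditions \eqref{eq: condition X 2}--\eqref{eq: condition X 3}, the quadratic $F_{\nu}(\alpha)$, and ultimately the function $c(\mu)$ via the choice $\nu=\nu(\alpha)$. In particular, the constraint $\mu\ge 2/3+\delta$ comes from this approximation error (equivalently $X\ge1$), not from any degeneracy of the mean-value theorem at small scales. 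Your optimization over $N$ alone with the two large-values bounds, without the $\nu$-parametrized approximation, would not reproduce the exponent \eqref{eq: c(mu)}: at $\theta=0$ a Montgomery--Huxley-style combination would target something like $12(1-\alpha)/5$, not $4\mu(1-\mu)/(2\mu^{2}-3\mu+2)$, and the discrepancy comes precisely from the much weaker approximation of a Beurling zeta by its partial sums.

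What you would need to add: the precise statement and proof of the $\chi$-weighted mean-value theorem, the approximation lemma with the extra free parameter $\nu$, the translation of the $X\ge1$ and error-term constraints into the quadratic inequality in $\alpha$ and $\nu$, and the $\epsilon$-perturbation argument that takes $\nu$ just above $\nu(\alpha)$ to get $c(\mu)+\eps$ uniformly on $\mu\ge 2/3+\delta$. Dropping the Hal\'asz--Montgomery ingredient altogether is actually the right move.
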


%
\begin{corollary}
If $(\MP,\MN)$ is a Beurling generalized number system with $\theta$-well-behaved integers, then $N(\alpha, T) = o(T)$ for every $\alpha > (\theta+2)/3$.
\end{corollary}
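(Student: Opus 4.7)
The plan is to deduce the corollary directly from Theorem \ref{th: zero-density}. Writing $\alpha = (1-\mu)\theta + \mu$ with $\mu = (\alpha-\theta)/(1-\theta)$, the hypothesis $\alpha > (\theta+2)/3$ translates to $\mu > 2/3$; choosing $\delta = \mu - 2/3 > 0$ places $\alpha$ safely in the regime covered by Theorem \ref{th: zero-density}.

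The core algebraic check is that the exponent $c(\mu)(1-\mu)$ drops strictly below $1$ for every $\mu > 2/3$. From the formula \eqref{eq: c(mu)},
\[
	c(\mu)(1-\mu) = \frac{4\mu(1-\mu)}{2\mu^{2}-3\mu+2},
\]
so the inequality $c(\mu)(1-\mu) < 1$ is equivalent to $6\mu^{2}-7\mu+2 > 0$. Factoring as $(3\mu-2)(2\mu-1)$, this polynomial is positive precisely when $\mu < 1/2$ or $\mu > 2/3$. In particular, it holds throughout the range we care about. At the boundary $\mu = 2/3$ one has $c(2/3)(1-2/3)=1$ (consistent with $c(2/3)=3$ as quoted after \eqref{eq: c(mu)}), so this is a genuinely critical threshold.

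With the inequality in hand, I would pick $\eps > 0$ small enough that $\eta \coloneqq (c(\mu)+\eps)(1-\mu) < 1$. Theorem \ref{th: zero-density} then yields
\[
	N(\alpha, T) \ll T^{\eta}(\log T)^{9} = o(T),
\]
as required. No substantial obstacle is expected; the corollary is essentially a readout of Theorem \ref{th: zero-density} at the transition point $\mu = 2/3$, which marks the crossover from the essentially trivial bound $N(\alpha,T) \ll_{\alpha} T\log T$ afforded by Lemma \ref{lem: local zeroes} to genuine sub-linear savings.
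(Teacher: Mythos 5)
Your proof is correct and is precisely the intended argument: the corollary is stated in the paper without a separate proof, as a direct readout of Theorem \ref{th: zero-density}. The algebra is right — $c(\mu)(1-\mu)<1$ reduces to $(3\mu-2)(2\mu-1)>0$ after clearing the always-positive denominator $2\mu^{2}-3\mu+2$ (discriminant $-7<0$), and this holds for all $\mu>2/3$, so taking $\eps$ small enough makes the exponent strictly less than $1$.
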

It would be of interest to know if the range where $N(\alpha, T) = o(T)$ holds, can be improved, potentially to $\alpha > (\theta+1)/2$.

\medskip
More or less simultaneous to the writing of this paper and independent from our work, R\'ev\'esz also managed to obtain a zero-density estimate only assuming \eqref{eq: well-behaved} (see \cite{ReveszCarlson}). His method differs from ours, and the estimate he obtains is 
\begin{equation}
\label{eq: zero-density Szilard}
	N(\alpha,T) \le 1000\frac{(A+K)^{4}}{(1-\theta)^{3}(1-\alpha)^{4}}T^{\frac{12(1-\alpha)}{1-\theta}} (\log T)^{5},
\end{equation}
for $T \ge T_{0} = T_{0}(\alpha,A,K,\theta)$.
Here, $K$ is the implicit $O$-constant in \eqref{eq: well-behaved}. Note that (as remarked in \cite{ReveszCarlson}), the factor $(1-\alpha)^{-4}$ may be replaced by $O((\log T)^{4})$ by using the de la Vall\'{e}e Poussin--Landau zero-free region \eqref{eq: zero-free region}.

Our zero-density estimate \eqref{eq: zero-density estimate} is a direct improvement to \eqref{eq: zero-density Szilard}, improving the constant in the exponent from $12$ to $c(\mu)+\eps$. Our zero-density estimate also has a larger (effective) range, namely $\alpha>(\theta+2)/3$ instead of $\alpha>(\theta+11)/12$. Furthermore, the value of $T_{0}$ in our estimate is independent of $\alpha$, whereas the value of $T_{0}$ could potentially go to $\infty$ as $\alpha\to1$ in R\'ev\'esz's estimate\footnote{His proof requires $T_0 \geq \exp\bigl(\frac{C}{1-\alpha}\log\frac{1}{1-\alpha}\bigr)$ for a suitable $C>0$.}. However, his estimate has the advantage that it is completely explicit in terms of all the involved parameters. It should be possible to compute the implicit constant in \eqref{eq: zero-density estimate} in terms of the parameters $\theta$, $A$, and $K$, but we decided not to pursue this.

\medskip
%

The proof of Theorem \ref{th: zero-density} is based on two central results. The first one, established in Section \ref{sec: MVT}, is an adaptation of the \emph{mean value theorem} for Dirichlet polynomials from which we deduce a large values estimate for \emph{generalized Dirichlet polynomials}, Corollary \ref{cor: large values}. The results in this section are of significant intrinsic interest and we expect they may well have applications beyond the theory of Beurling numbers. The second main technical tool is a suitable \emph{zero-detection method}. In Section \ref{sec: proofzero} we explore Carlson's original zero-detection method. However, due to the fact that a Beurling zeta function cannot in general be as well approximated by its partial sums as the Riemann zeta function, a delicate optimization argument was required to reach $c(\mu)$ instead of a weaker constant in the exponent.

In Section \ref{sec: PNTshort} and \ref{secfrd:dmvex} we investigate the consequences of our zero-density estimates on the PNT in short intervals. 
Interestingly and somewhat surprisingly, a zero-density estimate such as \eqref{eq: weakzerodensity} alone does not suffice to obtain the PNT in short intervals for any $\lambda < 1$. We prove this in Section \ref{secfrd:dmvex} by a careful analysis of the example of Diamond, Montgomery and Vorhauer. On the other hand, if one has a \emph{zero-free region} of Littlewood-type at ones disposition, then the PNT in short intervals does follow from \eqref{eq: weakzerodensity}. Finally, in Appendix \ref{sec: appendixexample}, we provide examples of generalized prime number systems demonstrating that it is impossible to improve $c(\mu)$ in Theorem \ref{th: zero-density} to a value smaller than $1$.

We normalize the Fourier transform as $\hat{f}(\xi) = \int_{-\infty}^{\infty}f(x)\e^{-\I \xi x}\dif x$. The implicit constants in the Vinogradov notation $\ll$ or the $O$-symbols are allowed to depend on $\theta$, the density $A$ and the implicit constant in \eqref{eq: well-behaved} but are, unless explicitly mentioned, otherwise absolute.

\section{A mean value theorem} \label{sec: MVT}
The main ingredient in the proof of our zero-density result is the following generalization of the classical mean value theorem for Dirichlet polynomials (see e.g.\ \cite[Theorem 6.1]{Montgomery71}).

\begin{theorem}
\label{th: MVT}
Let $N>1$ and suppose that $1\le n_{0} \le n_{1} \le \dotso \le n_{K} \le N$ are real numbers. For $\lambda>0$ denote by $\chi(x, \lambda)$ the number of $n_{k}$'s within distance $\lambda$ of $x$:
\[
	\chi(x, \lambda) = \#\{k: \abs{x-n_{k}} \le \lambda\}.
\]
Suppose $a_{k}$ ($k=0, \dotso, K$) are complex numbers, and set
\[
	S(t) = \sum_{k=0}^{K}\frac{a_{k}}{n_{k}^{\I t}}.
\]
Then for $T_{0}\in \R$, $T>0$, and $\eta>0$ we have
\begin{equation}
\label{MVT}
	\int_{T_{0}}^{T_{0}+T}\abs{S(t)}^{2}\dif t \ll \Bigl(T + \frac{1}{\eta}\Bigr)\sum_{k=0}^{K}\chi(n_{k}, \eta N)\abs{a_{k}}^{2}.
\end{equation}
\end{theorem}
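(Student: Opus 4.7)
The approach is Fourier-analytic: replace the sharp indicator $\mathbf{1}_{[T_{0},T_{0}+T]}$ by a nonnegative smooth majorant $M(t) \ge \mathbf{1}_{[T_{0},T_{0}+T]}(t)$ whose Fourier transform $\hat{M}$ is compactly supported in a small interval around the origin. A classical Beurling--Selberg-type extremal construction yields, for any $\eta>0$, such an $M$ with $\supp \hat{M} \subset [-c\eta, c\eta]$ and $\int M \ll T + 1/\eta$, where $c$ is an absolute constant; since $M \ge 0$, one automatically has $\abs{\hat{M}(\xi)} \le \hat{M}(0) = \int M$.

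Expanding the square and swapping integration and summation, I would write
\[
    \int_{T_{0}}^{T_{0}+T}\abs{S(t)}^{2}\dif t \le \int M(t)\abs{S(t)}^{2}\dif t = \sum_{j,k=0}^{K} a_{j}\overline{a_{k}}\,\hat{M}(\log n_{j} - \log n_{k}).
\]
The support condition on $\hat{M}$ restricts the sum to pairs $(j,k)$ with $\abs{\log(n_{j}/n_{k})} \le c\eta$; linearising the logarithm (and using $n_{j},n_{k} \le N$), this forces $\abs{n_{j}-n_{k}} \le c'\eta N$ for an absolute $c'>0$. Combining $\abs{\hat{M}(\xi)} \ll T+1/\eta$ with the AM--GM bound $\abs{a_{j}\overline{a_{k}}} \le (\abs{a_{j}}^{2}+\abs{a_{k}}^{2})/2$ and symmetry in $(j,k)$, the double sum is bounded by
\[
    \Bigl(T + \frac{1}{\eta}\Bigr) \sum_{k=0}^{K} \abs{a_{k}}^{2}\, \chi(n_{k}, c'\eta N).
\]
Running the construction with $\eta/c'$ in place of $\eta$ absorbs $c'$ into the implicit constant and delivers precisely \eqref{MVT}. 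The complementary range $\eta \gtrsim 1$ is trivial, since then $\chi(n_{k}, \eta N) = K+1$ for every $k$, and a direct Cauchy--Schwarz gives $\abs{S(t)}^{2} \le (K+1)\sum\abs{a_{k}}^{2}$, integrating to the desired bound.

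The main technical ingredient is the majorant $M$ with $L^{1}$-norm $T + O(1/\eta)$ and Fourier support of length $O(\eta)$, with constants independent of $T_{0}, T, \eta, N$. This construction is classical (variants of the Beurling extremal function for $\operatorname{sgn}$, suitably rescaled and differenced, or a Fejér-type smoothing), but needs to be set up carefully so that the radius in $\chi(n_{k},\cdot)$ ends up as $\eta N$ rather than some constant multiple thereof; this is the only quantitative point requiring attention. Everything else is a direct application of Parseval-type identities together with AM--GM and combinatorial counting, and no further obstruction arises.
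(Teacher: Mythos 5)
Your proof is correct and follows essentially the same route as the paper's: a Beurling--Selberg majorant of the indicator of $[T_{0},T_{0}+T]$ with Fourier support of width $O(\eta)$, expanding $\abs{S}^{2}$, restricting the resulting double sum by the Fourier support condition, passing from $\abs{\log(n_{j}/n_{k})}\le c\eta$ to $\abs{n_{j}-n_{k}}\ll \eta N$, and finishing with AM--GM and a rescaling of $\eta$. Your explicit handling of the complementary range $\eta\gtrsim 1$ (where linearizing the logarithm fails) is a welcome clarification of a point the paper leaves implicit.
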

Note that for classical Dirichlet series, this result reduces to upper bound provided by the classical mean value theorem: if $n_{k}=k+1$, then with $\eta = 1/(2N)$ the right hand side of \eqref{MVT} becomes $(T+2N)\sum_{k}\abs{a_{k}}^{2}$.

\begin{proof}
The proof is classical: one multiplies the integrand with a majorant of the characteristic function of $[T_{0}, T_{0}+T]$ having well-localized Fourier transform, one squares out the sum, and interchanges summation and integration. We will utilize the Beurling--Selberg function $B(x)$ (see e.g. \cite{Vaaler85}) which satisfies\footnote{The Fourier transform of the function $B$ may be interpreted in the distributional setting, even though $B$ is not integrable.}
\[
	\forall x \in \R: B(x) \ge \sgn(x), \quad \int_{-\infty}^{\infty}\bigl(B(x)-\sgn(x)\bigr)\dif x = 1, \quad \supp \hat{B} \subseteq [-2\pi, 2\pi].
\]
Given $\eta>0$, set 
\[
	F(x,\eta) = \frac{1}{2}\bigl\{B\bigl(\eta(x-T_{0})\bigr)+B\bigl(\eta(T_{0}+T-x)\bigr)\bigr\}.
\]
Then 
\begin{gather*}
	F(x, \eta) \ge \begin{dcases}
		1	&\text{if } x \in [T_{0}, T_{0}+T], \\
		0	&\text{else};
		\end{dcases}\\
	\int_{-\infty}^{\infty}F(x,\eta)\dif x = T + O(1/\eta), \quad \supp \hat{F} \subseteq [-2\pi\eta, 2\pi\eta].
\end{gather*}

We find
\[
	\int_{T_{0}}^{T_{0}+T}\abs{S(t)}^{2}\dif t \le \int_{-\infty}^{\infty}\abs{S(t)}^{2}F(t,\eta)\dif t = \sum_{k,l=0}^{K}a_{k}\overline{a_{l}}\hat{F}\Bigl(\log\frac{n_{k}}{n_{l}}, \eta\Bigr).
\]
Now $\abs[1]{\log\frac{n_{k}}{n_{l}}} \ge 2\pi\eta$ as soon as $\abs{n_{k}-n_{l}} \geq 4\pi\eta N$, say. Furthermore $\hat{F}(\xi, \eta) \ll T+1/\eta$ and we obtain
\begin{align*}
	\int_{T_{0}}^{T_{0}+T}\abs{S(t)}^{2}\dif t 
		&\ll \sum_{\substack{k, l \\ \abs{n_{k}-n_{l}} \le 4\pi\eta N}}\abs{a_{k}}\abs{a_{l}}\abs{\hat{F}\Bigl(\log\frac{n_{k}}{n_{l}}, \eta\Bigr)} \ll \Bigl(T+\frac{1}{\eta}\Bigr) \sum_{\substack{k, l \\ \abs{n_{k}-n_{l}} \le 4\pi\eta N}}\abs{a_{k}}^{2} + \abs{a_{l}}^{2}\\
		&\ll \Bigl(T+\frac{1}{\eta}\Bigr) \sum_{k=0}^{K}\chi(n_{k}, 4\pi\eta N)\abs{a_{k}}^{2}.
\end{align*}
The result now follows upon replacing $\eta$ by $\eta' = 4\pi\eta$.
\end{proof}

By an elementary lemma of Gallagher, one may deduce the corresponding discrete mean value theorem.
\begin{theorem} \label{frdv: thdisMV}
With the same notation as in Theorem \ref{th: MVT}, let $\delta>0$ and $\mathcal{T} \subseteq [T_{0}+\delta/2, T_{0}+T-\delta/2]$ be a set of $\delta$-well-spaced points, in the sense that $\abs{t-t'}\ge \delta$ for $t, t'\in\mathcal{T}$, $t\neq t'$. Then, for any $\eta>0$,
\[
	\sum_{t\in\mathcal{T}}\abs{S(t)}^{2} \ll \Bigl(T+ \frac{1}{\eta}\Bigr)\Bigl(\log N + \frac{1}{\delta}\Bigr)\sum_{k=0}^{K}\chi(n_{k}, \eta N)\abs{a_{k}}^{2}.
\]
\end{theorem}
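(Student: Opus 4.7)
The plan is to deduce Theorem \ref{frdv: thdisMV} from the continuous estimate in Theorem \ref{th: MVT} via Gallagher's classical lemma, which states that for any continuously differentiable $f$ on $[T_0, T_0+T]$ and any $\delta$-well-spaced set $\mathcal{T} \subseteq [T_0+\delta/2, T_0+T-\delta/2]$,
\[
	\sum_{t\in\mathcal{T}}\abs{f(t)}^{2} \le \frac{1}{\delta}\int_{T_{0}}^{T_{0}+T}\abs{f(t)}^{2}\dif t + \biggl(\int_{T_{0}}^{T_{0}+T}\abs{f(t)}^{2}\dif t \cdot \int_{T_{0}}^{T_{0}+T}\abs{f'(t)}^{2}\dif t\biggr)^{1/2}.
\]
Applied to $f=S$, this reduces matters to bounding both $\int \abs{S(t)}^{2}\dif t$ and $\int \abs{S'(t)}^{2}\dif t$ by MVT-type estimates.

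First I would handle $\int \abs{S(t)}^{2}\dif t$, which is already supplied by Theorem \ref{th: MVT}: with $M \coloneqq \bigl(T+\tfrac{1}{\eta}\bigr)\sum_{k}\chi(n_{k}, \eta N)\abs{a_{k}}^{2}$, one has $\int_{T_{0}}^{T_{0}+T}\abs{S(t)}^{2}\dif t \ll M$. Next, observing that
\[
	S'(t) = -\I\sum_{k=0}^{K}\frac{a_{k}\log n_{k}}{n_{k}^{\I t}}
\]
is itself a generalized Dirichlet polynomial with the same frequencies $n_{k}$ but coefficients $a_{k}\log n_{k}$, a second application of Theorem \ref{th: MVT} gives
\[
	\int_{T_{0}}^{T_{0}+T}\abs{S'(t)}^{2}\dif t \ll \Bigl(T+\frac{1}{\eta}\Bigr)\sum_{k=0}^{K}\chi(n_{k}, \eta N)\abs{a_{k}}^{2}(\log n_{k})^{2} \ll (\log N)^{2}\, M,
\]
since $n_{k}\le N$.

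Plugging both bounds into Gallagher's inequality yields
\[
	\sum_{t\in\mathcal{T}}\abs{S(t)}^{2} \ll \frac{M}{\delta} + \bigl(M \cdot (\log N)^{2} M\bigr)^{1/2} = M\Bigl(\frac{1}{\delta} + \log N\Bigr),
\]
which is exactly the desired estimate. The only mildly delicate point is recognizing that $S'$ falls into the same framework as $S$, so that Theorem \ref{th: MVT} applies uniformly to both integrals with the same $\eta$ and the same localization data $\chi(n_{k}, \eta N)$; otherwise the derivation is completely mechanical, with no genuine obstacle beyond invoking Gallagher's lemma in its standard form.
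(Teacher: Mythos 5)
Your proposal is correct and matches the paper's proof in all essentials: both deduce the discrete bound from Theorem \ref{th: MVT} via Gallagher's lemma, control $\int \abs{S'}^2$ by observing that $S'$ is a generalized Dirichlet polynomial with the same frequencies and coefficients bounded by $\abs{a_k}\log N$, and combine via Cauchy--Schwarz. The only cosmetic difference is that you state Gallagher's lemma with Cauchy--Schwarz already applied, whereas the paper states it with the term $\int\abs{SS'}$ and invokes Cauchy--Schwarz afterward.
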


\begin{proof}
By Gallagher's lemma \cite[Lemma 1.2]{Montgomery71} one has
\[
	\sum_{t\in\mathcal{T}}\abs{S(t)}^{2} \le \frac{1}{\delta}\int_{T_{0}}^{T_{0}+T}\abs{S(t)}^{2}\dif t + \int_{T_{0}}^{T_{0}+T}\abs{S(t)S'(t)}\dif t.
\]
The result now follows by applying the Cauchy--Schwarz inequality, Theorem \ref{th: MVT}, and realizing that the coefficients $b_{k}$ of the generalized Dirichlet polynomial $S'(t)$ satisfy $\abs{b_{k}} \le \abs{a_{k}}\log N$,\end{proof}

\begin{corollary}
\label{cor: large values}
Assume the same hypotheses as in Theorem \ref{frdv: thdisMV}. Let $V$ be such that $\abs{S(t)} \ge V$ for every $t\in \mathcal{T}$. Then
\[
	\abs{\mathcal{T}} \ll \frac{1}{V^{2}}\Bigl(T+ \frac{1}{\eta}\Bigr)\Bigl(\log N + \frac{1}{\delta}\Bigr)\sum_{k=0}^{K}\chi(n_{k}, \eta N)\abs{a_{k}}^{2}.
\]
\end{corollary}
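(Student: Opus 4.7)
The proof should be essentially immediate from Theorem \ref{frdv: thdisMV}. The plan is to observe that the hypothesis $\abs{S(t)} \ge V$ for every $t \in \mathcal{T}$ gives the trivial lower bound
\[
	V^{2}\abs{\mathcal{T}} \le \sum_{t\in\mathcal{T}}\abs{S(t)}^{2},
\]
so it suffices to estimate the right-hand side from above. First I would apply Theorem \ref{frdv: thdisMV} directly to the sum $\sum_{t\in\mathcal{T}}\abs{S(t)}^{2}$, which yields
\[
	\sum_{t\in\mathcal{T}}\abs{S(t)}^{2} \ll \Bigl(T+\frac{1}{\eta}\Bigr)\Bigl(\log N + \frac{1}{\delta}\Bigr)\sum_{k=0}^{K}\chi(n_{k},\eta N)\abs{a_{k}}^{2}.
\]
Then, dividing both sides by $V^{2}$ gives the desired bound on $\abs{\mathcal{T}}$.

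There is essentially no obstacle here: the corollary is a purely formal consequence of the discrete mean value theorem, obtained by converting an $L^{2}$-bound into a counting estimate via Chebyshev's inequality. The only (trivial) check is that the hypotheses of Theorem \ref{frdv: thdisMV} apply verbatim, which is granted by the statement ``Assume the same hypotheses as in Theorem \ref{frdv: thdisMV}''. Thus the entire proof is a one-line derivation and no further argument is needed.
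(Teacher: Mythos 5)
Your proposal is correct and is exactly the intended derivation: the paper states the corollary without proof immediately after Theorem \ref{frdv: thdisMV}, treating it as the obvious Chebyshev-type consequence $V^{2}\abs{\mathcal{T}} \le \sum_{t\in\mathcal{T}}\abs{S(t)}^{2}$ followed by the discrete mean value bound. Nothing further is needed.
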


%

\section{The zero-density estimate} \label{sec: proofzero}
Armed with the mean value theorem, we are now in the position to prove Theorem \ref{th: zero-density}. The proof goes along classical lines: one introduces zero-detecting polynomials taking large values at zeros of $\zeta$, and one verifies with the help of Corollary \ref{cor: large values} that this cannot happen too often. Our exposition is inspired by \cite[Section 10.2]{IwaniecKowalski}.

First we require an approximation of $\zeta(s)$ by the partial sums of its defining Dirichlet series in the critical strip. Two aspects are important: the length of the sum, and the error in the approximation. For $s=\sigma+\I t$ with $t\asymp T$, we use sums of length $T^{\frac{\nu}{1-\theta}}$ for some $\nu\in(1,2]$. The shorter the sum is, the bigger the error will be. We recall we always assume that the generalized integers are $\theta$-well-behaved for some $\theta\in[0,1)$. 
\begin{lemma}
\label{lem: approximation zeta}
Let $T>1$. With $s=\sigma+\I t$, we have uniformly for $(\theta+1)/2\le \sigma \le 2$, $T \le \abs{t} \le 2T$, and $\nu \in [1,2]$:
\[
	\zeta(s) = \sum_{n_{k}\le T^{\frac{\nu}{1-\theta}}}n_{k}^{-s} + O\bigl(T^{\frac{1+(\nu-1)\theta-\nu\sigma}{1-\theta}}\bigr).
\]
\end{lemma}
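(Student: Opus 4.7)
My plan is to derive the approximate functional equation by Abel summation, then decompose the tail integral using the well-behaved hypothesis $N(x) = Ax + E(x)$ with $E(x) = O(x^{\theta})$, and finally verify that each resulting error term matches the claimed exponent.

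Set $X = T^{\nu/(1-\theta)}$. For $\sigma > 1$ the Dirichlet series converges absolutely, so Abel summation gives
\[
	\zeta(s) = \sum_{n_{k}\le X}n_{k}^{-s} + \int_{X}^{\infty}x^{-s}\dif N(x).
\]
Splitting $N(x) = Ax + E(x)$, computing $A\int_{X}^{\infty}x^{-s}\dif x = AX^{1-s}/(s-1)$, and integrating by parts in the remaining Stieltjes integral (using $x^{-s}E(x)\to 0$ at infinity, valid whenever $\sigma > \theta$), I obtain
\[
	\zeta(s) - \frac{AX^{1-s}}{s-1} = \sum_{n_{k}\le X}n_{k}^{-s} - X^{-s}E(X) + s\int_{X}^{\infty}x^{-s-1}E(x)\dif x.
\]
The right-hand side is analytic for $\sigma > \theta$, and the left-hand side is too since both $\zeta(s)$ and $AX^{1-s}/(s-1)$ carry the same simple pole at $s=1$ with residue $A$, so their difference extends analytically past $s=1$. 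Hence the identity persists throughout $\sigma > \theta$, in particular on the strip $(\theta+1)/2 \le \sigma \le 2$.

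It remains to bound the three terms $AX^{1-s}/(s-1)$, $X^{-s}E(X)$, and $s\int_X^\infty x^{-s-1}E(x)\dif x$. Using $|s-1| \asymp T$, $|E(x)| \ll x^{\theta}$, and $\sigma - \theta \ge (1-\theta)/2$ (so that the integral $\int_X^\infty x^{\theta-\sigma-1}\dif x$ contributes only an absolute constant), the three terms are $\ll X^{1-\sigma}/T$, $\ll X^{\theta-\sigma}$, and $\ll T X^{\theta-\sigma}$ respectively. The third is the largest: substituting $X = T^{\nu/(1-\theta)}$ gives exponent $1 + \nu(\theta-\sigma)/(1-\theta) = (1+(\nu-1)\theta - \nu\sigma)/(1-\theta)$, which is exactly the claimed bound. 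The first term has exponent $(\nu-\nu\sigma -1+\theta)/(1-\theta)$, which differs from the target by $2-\nu \ge 0$ (this is where the hypothesis $\nu \le 2$ enters and is the only genuinely delicate numerical check), and the second is clearly smaller by a factor $T$. Combining these estimates completes the proof.
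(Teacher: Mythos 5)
Your proof is correct and takes essentially the same approach as the paper: Abel summation to isolate the tail integral, the split $N(x)=Ax+E(x)$ with $E(x)\ll x^\theta$, an integration by parts on the remainder, analytic continuation to $\sigma>\theta$, and term-by-term estimation yielding the three error contributions $X^{1-\sigma}/T$, $X^{\theta-\sigma}$, and $TX^{\theta-\sigma}$. The only cosmetic difference is that you absorb the $1/(\sigma-\theta)$ factor into the implicit constant via $\sigma\ge(\theta+1)/2$ at the outset, whereas the paper keeps it explicit until the final line; the identification of $\nu\le 2$ as the condition making the third term dominant is the same in both.
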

We note that basic estimates of this kind for Beurling zeta functions are worked out with explicit constants in \cite{Reveszbasicformulas} and \cite{ReveszCarlson}.
\begin{proof}
Write $N(x) = Ax + R(x)$ with $R(x) \ll x^{\theta}$. For $\sigma>1$ and $Y>1$ we have
\[
	\zeta(s) = \sum_{n_{k}\le Y}n_{k}^{-s} + \int_{Y}^{\infty}u^{-s}\dif N(u) = \sum_{n_{k}\le Y}n_{k}^{-s} + \frac{AY^{1-s}}{s-1} + \int_{Y}^{\infty}u^{-s}\dif R(u).
\]
The right hand side has analytic continuation (except for a simple pole at $s=1$) to the half-plane $\sigma>\theta$. Integrating by parts gives 
\[
	\zeta(s) = \sum_{n_{k}\le Y}n_{k}^{-s} + O\biggl(\frac{Y^{1-\sigma}}{T} + Y^{\theta-\sigma} + T\frac{Y^{\theta-\sigma}}{\sigma-\theta}\biggr), \quad \text{for $\sigma>\theta$, $T \le \abs{t} \le 2T$.}
\]
We take $Y=T^{\frac{\nu}{1-\theta}}$. If $\nu \le 2$, then the last error term dominates and the result follows.
\end{proof}

We now fix a number $\nu\in (1,2]$ and approximate $\zeta(s)$ by a sum of length $T^{\frac{\nu}{1-\theta}}$. Consider for a parameter $X\ge1$ to be determined later, the Dirichlet polynomial
\[
	M_{X}(s) = \sum_{n_{k}\le X}\frac{\mu(n_{k})}{n_{k}^{s}},
\]
where $\mu$ is the M\"{o}bius function of the number system. Uniformly for $0\le \sigma\le 1$ we trivially have $M_{X}(s) \ll X^{1-\sigma}\log (X+1)$. If $X\le T^{\frac{\nu}{1-\theta}}$ then
\begin{equation}
\label{eq: zeta M}
	\zeta(s)M_{X}(s) = 1 + \sum_{X<n_{k}\le XT^{\frac{\nu}{1-\theta}}}\frac{a_{k}}{n_{k}^{s}}  + O\bigl(T^{\frac{1+(\nu-1)\theta-\nu\sigma}{1-\theta}}X^{1-\sigma}\log (X+1)\bigr),
\end{equation}
where
\[
	a_{k} = \sum_{\substack{n_{l}n_{j} = n_{k}\\ n_{l}\le T^{\frac{\nu}{1-\theta}},  n_{j}\le X}}\mu(n_{j}) 
	= \begin{dcases} 	1 	&\text{if } n_{k}=1, \\
					0	&\text{if } 1< n_{k} \le X \text{ or } n_{k} > XT^{\frac{\nu}{1-\theta}}.
					\end{dcases}	
\]
For $X < n_{k} \le  XT^{\frac{\nu}{1-\theta}}$ we have $\abs{a_{k}}\le d(n_{k})$, $d$ being the divisor function of the generalized number system\footnote{We slightly abuse notation here as it may happen that $d(n_k) \neq d(n_{k+1})$ even when the different generalized integers $n_k$ and $n_{k+1}$ share the same numerical value.  A similar comment holds for the function $\mu$.}.

We now assume that $\sigma >(\theta+1)/2$ and
\begin{align}
X			&\le T^{\frac{\nu}{1-\theta}} \label{eq: condition X 1},\\
X			&\ge 1 \label{eq: condition X 2}, \\
X^{1-\sigma} 	&\le T^{\frac{\nu\sigma-1-(\nu-1)\theta}{1-\theta}}(\log T)^{-2}. \label{eq: condition X 3}
\end{align}
The error term in \eqref{eq: zeta M} does not exceed $1/2$ provided that $T$ is sufficiently large. We split the sum $\sum_{X<n_{k}\le XT^{\frac{\nu}{1-\theta}}}$ in dyadic subsums:
\begin{gather*}
	\sum_{X<n_{k}\le XT^{\frac{\nu}{1-\theta}}}\frac{a_{k}}{n_{k}^{s}} = \sum_{l=0}^{L}D_{l}(s), \quad D_{l}(s) = \sum_{N<n_{k}\le 2N}\frac{a_{k}}{n_{k}^{s}},\\
	 N=2^{l}X, \quad l=0,1,\dotsc, L, \quad L\ll \log T.
\end{gather*}
Under the assumptions \eqref{eq: condition X 1} and \eqref{eq: condition X 3} we get, for sufficiently large $T$,
\begin{equation}
\label{D_{l} large}
	\abs[3]{\zeta(s)M_{X}(s) - \sum_{l=0}^{L}D_{l}(s)} \ge \frac{1}{2}.
\end{equation}

\medskip

Let now $\alpha > (\theta+1)/2$, and consider the set of zeros $\rho=\beta+\I \gamma$ of $\zeta$ satisfying $\beta\ge\alpha$ and $T\le \abs{\gamma}\le 2T$. From this set we select a subset $\mathcal{R}$ of $1$-separated zeros in the following way. Take the first zero $\rho_{1} = \beta_{1} + \I\gamma_{1}$ with $\gamma_{1}$ minimal in $[T, 2T]$. Inductively, if $\rho_{j}=\beta_{j} + \I\gamma_{j}$ has been chosen, we chose $\rho_{j+1}=\beta_{j+1} +\I\gamma_{j+1}$ with $\gamma_{j+1}$ minimal in $[\gamma_{j} + 1, 2T]$. By applying Lemma \ref{lem: local zeroes} with $\sigma = (\theta+1)/2$, it is clear that the total number of zeros is bounded as $N(\alpha,2T) - N(\alpha,T) \ll \abs{\mathcal{R}}\log T$.

If now $\rho \in \mathcal{R}$, then \eqref{D_{l} large} implies $\abs{D_{l}(\rho)} \ge 1/(2(L+1))$ for at least one value of $l$. Setting $R_{l}$ to be the number of zeros $\rho$ in $\mathcal{R}$ for which $\abs{D_{l}(\rho)} \ge 1/(2(L+1))$ delivers
\begin{equation}
\label{bound in terms of sum R_{l}}
	N(\alpha,2T) - N(\alpha,T) \ll (\log T) \sum_{l=0}^{L}R_{l}.
\end{equation}

We now obtain a zero-density estimate by bounding $R_{l}$ via Corollary \ref{cor: large values}. First we mention a simple but useful lemma concerning mean values of powers of the generalized divisor function \cite[Proposition 4.4.1]{Knopfmacher}. For the proof of Theorem \ref{th: zero-density} we only need the case $j = 2$.

\begin{lemma}[Knopfmacher]
\label{lem: divisor estimate}
For each $j$ there exists a constant $c_{j}>0$ such that, with $d$ being the generalized divisor function of a number system having well-behaved integers,  
\[
	\sum_{n_{k}\le x}d(n_{k})^{j} \sim c_{j}x\log^{2^{j}-1}x.
\]
\end{lemma}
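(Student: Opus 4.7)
The plan is to extract the asymptotic from the analytic structure of the Dirichlet series
\[
	F_j(s) \coloneqq \sum_k \frac{d(n_k)^j}{n_k^{s}}, \qquad \Re s > 1.
\]
Since $d(n)^j$ is multiplicative on the generalized semigroup, $F_j$ admits the Euler product $\prod_{p\in\MP}\sum_{m\ge 0}(m+1)^j p^{-ms}$. An elementary comparison of Taylor coefficients around $x=0$ gives
\[
	(1-x)^{2^j}\sum_{m\ge 0}(m+1)^j x^m = 1 + O(x^{2}),
\]
the linear term vanishing from $-2^j + 2^j = 0$. This motivates setting
\[
	H_j(s) \coloneqq \prod_p (1-p^{-s})^{2^j}\sum_{m\ge 0}\frac{(m+1)^j}{p^{ms}},
\]
and the next step is to verify that this Euler product converges absolutely to a nonvanishing analytic function in $\Re s > 1/2$, using that $\sum_p p^{-2\sigma} < \infty$ for $\sigma > 1/2$ (a consequence of \eqref{eq: well-behaved}). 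This gives the factorization $F_j(s) = \zeta(s)^{2^j} H_j(s)$, meromorphically extending $F_j$ to $\Re s > 1/2$ with a single pole of order $2^j$ at $s=1$.

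I would then derive the asymptotic by a Perron-type contour integration. Moving the vertical contour past $s=1$ into the de la Vall\'ee Poussin region \eqref{eq: zero-free region} and collecting the residue
\[
	\res_{s=1}F_j(s)\frac{x^s}{s} = x \cdot P_j(\log x),
\]
where $P_j$ is a polynomial of degree $2^j-1$ with leading coefficient $c_j = A^{2^j} H_j(1)/(2^j-1)! > 0$, would produce the main term $c_j\, x \log^{2^j-1} x$. The remaining segments of the shifted contour would contribute $o(x\log^{2^j-1} x)$ by standard polynomial bounds on $\zeta$ in the critical strip derivable from \eqref{eq: well-behaved}.

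The hard part will be justifying the contour shift in the absence of a functional equation for $\zeta$: one needs polynomial growth estimates on vertical lines under the $\theta$-well-behaved hypothesis, which are available via partial summation arguments of the kind developed in \cite{Reveszbasicformulas}. Fortunately, for the application to Theorem~\ref{th: zero-density} only the case $j=2$ is needed, and there the identity $(1-x)(1+x) = 1-x^2$ specializes the factorization to
\[
	F_2(s) = \frac{\zeta(s)^{4}}{\zeta(2s)},
\]
reducing the analysis to a direct contour integration of $\zeta(s)^4$ (with $\zeta(2s)$ analytic and nonvanishing near $s=1$), which poses no additional difficulty beyond the classical argument.
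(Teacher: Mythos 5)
The paper does not give its own proof of this lemma: it is cited directly from Knopfmacher \cite[Proposition 4.4.1]{Knopfmacher}, so there is no internal argument to compare against. Your blind proposal is nonetheless a correct and essentially standard proof, and in fact reconstructs the same analytic skeleton that underlies Knopfmacher's treatment: since $d^j$ is multiplicative on the generalized integers, the Dirichlet series factors as $F_j(s)=\zeta(s)^{2^j}H_j(s)$, where the local factor $(1-x)^{2^j}\sum_{m\ge 0}(m+1)^j x^m$ is actually a \emph{polynomial} (since $\sum_m (m+1)^j x^m$ has denominator $(1-x)^{j+1}$ and $2^j\ge j+1$) with constant term $1$ and vanishing linear term, so $H_j$ converges absolutely for $\Re s>1/2$ using only $\sum_p p^{-2\sigma}\le \zeta(2\sigma)<\infty$ --- this step needs nothing beyond $N(x)\ll x$, not the full strength of \eqref{eq: well-behaved}. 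Your computation of the leading coefficient $c_j=A^{2^j}H_j(1)/(2^j-1)!$ is also correct. The one place you understate the work is the passage from the meromorphic continuation to the asymptotic: a naive shift of the infinite vertical line diverges because $\zeta(\sigma_0+\I t)^{2^j}\ll \abs{t}^{C}$ does not decay; one must either use a truncated Perron formula with horizontal segments at height $\pm T$ and optimize $T$ against $x$, or replace the contour argument altogether with a Delange--Ikehara-type Tauberian theorem (the coefficients $d(n_k)^j$ are nonnegative, so a Tauberian theorem for higher-order poles applies; this is in fact closer to Knopfmacher's actual route). Either repair is routine and both are available in the Beurling setting thanks to the growth estimates furnished by \eqref{eq: well-behaved} (cf. Lemma~\ref{lem: approximation zeta}). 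Finally, a minor remark: ``nonvanishing'' of $H_j$ is neither needed nor, for general $j$, entirely obvious throughout $\Re s>1/2$; what matters is only analyticity near $s=1$ and $H_j(1)>0$, which is immediate since each local factor there is a positive number.
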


The assumption \eqref{eq: well-behaved} implies in particular that for $\lambda\ll n_{k}^{\theta}$, 
\[
	\chi(n_{k}, \lambda) = N(n_{k}+\lambda) - N(n_{k}-\lambda-0) = 2A\lambda + O\bigl((n_{k}+\lambda)^{\theta}\bigr) \ll n_{k}^{\theta}.
\]
The sums $D_{l}(\rho)$ can now be estimated via Corollary \ref{cor: large values} by setting $\eta=(2N)^{\theta-1}$, $\delta=1$, and $V=1/(2(L+1))$:
\begin{align*}
R_{l}	&\ll L^{2}\bigl(T+N^{1-\theta}\bigr)\log N \sum_{N<n_{k}\le 2N}\chi(n_{k}, (2N)^{\theta})\frac{d(n_{k})^{2}}{n_{k}^{2\alpha}} \\
	&\ll (\log T)^{2}\bigl(T+N^{1-\theta}\bigr)(\log T) N^{\theta-2\alpha} N(\log N)^{3}\\
	&\ll (\log T)^{6}\bigl(TN^{-(2\alpha-1-\theta)} + N^{2(1-\alpha)}\bigr).
\end{align*}
Since $X\le N\le XT^{\frac{\nu}{1-\theta}}$, we get
\[
	R_{l} \ll (\log T)^{6}\Bigl(TX^{-(2\alpha-1-\theta)} + X^{2(1-\alpha)}T^{\frac{2\nu(1-\alpha)}{1-\theta}}\Bigr).
\]
After selecting $X = T^{\frac{1}{1-\theta}\left(1-2\nu\frac{1-\alpha}{1-\theta}\right)}$ we obtain
\begin{align*}
	R_{l}					&\ll T^{\frac{1-\alpha}{1-\theta}\left(2+2\nu-4\nu\frac{1-\alpha}{1-\theta}\right)} (\log T)^{6}, \quad \text{hence by \eqref{bound in terms of sum R_{l}}} \\
	N(\alpha, 2T)-N(\alpha,T) 	&\ll T^{\frac{1-\alpha}{1-\theta}\left(2+2\nu-4\nu\frac{1-\alpha}{1-\theta}\right)}(\log T)^{8}, \quad \text{and therefore}\\
	N(\alpha,T) 			&\ll T^{\frac{1-\alpha}{1-\theta}\left(2+2\nu-4\nu\frac{1-\alpha}{1-\theta}\right)}(\log T)^{9}.
\end{align*}

Finally it remains to check the assumptions on $X$. The first one, \eqref{eq: condition X 1}, is immediately clear, while \eqref{eq: condition X 2}--\eqref{eq: condition X 3} shall determine the range of validity for $\alpha$ of the zero-density estimate. Condition \eqref{eq: condition X 2} translates to
\[
	\alpha \ge \frac{\theta+(2\nu-1)}{2\nu},
\]
while \eqref{eq: condition X 3} is satisfied with $\sigma=\beta$ for zeros $\rho=\beta+\I\gamma \in \mathcal{R}$ with $\beta\ge\alpha$ as soon as
\[
	\frac{1-\alpha}{1-\theta}\biggl(1-2\nu\frac{1-\alpha}{1-\theta}\biggr) < \frac{\nu\alpha-1-(\nu-1)\theta}{1-\theta},
\]
and $T$ is sufficiently large. This inequality is equivalent with
\[
	F_{\nu}(\alpha) \coloneqq 2\nu\alpha^{2} - \bigl(3\nu-1+(\nu+1)\theta\bigr)\alpha  + \bigl(2\nu-2 + (3-\nu)\theta + (\nu-1)\theta^{2}\bigr)> 0.
\]
The quadratic polynomial $F_{\nu}(\alpha)$ has discriminant 
\[
	\Delta_{\nu}\coloneqq (1-\theta)^{2}(-7\nu^{2}+10\nu+1).
\]
If $\Delta_{\nu} <0$, that is when $\nu > (5+\sqrt{32})/7 \approx 1.52$, then $F_{\nu}(\alpha)>0$ for any $\alpha$ and the zero-density estimate holds uniformly for $\alpha\ge \frac{\theta + (2\nu-1)}{2\nu}$. This is also the case when $F_{\nu}(\alpha)$ has zeros and its largest zero $\alpha(\nu)$ obeys $\alpha(\nu) < \frac{\theta + (2\nu-1)}{2\nu}$, where
\begin{equation}
\label{eq: alpha_{+}}
	\alpha(\nu) = \frac{1}{4\nu}\Bigl(3\nu -1 + (\nu+1)\theta + (1-\theta)\sqrt{-7\nu^{2}+10\nu+1}\Bigr).
\end{equation}
After some calculations, one finds that this happens when $\nu>3/2$. Otherwise the zero-density estimate holds uniformly in intervals $\alpha \ge \alpha(\nu) + \delta$ for $\delta>0$. Note that $\alpha(1) = 1$. We conclude:
\begin{theorem}
\label{th: zero-density nu}
Suppose the integers are $\theta$-well-behaved.
\begin{enumerate}
	\item If $\nu \in (3/2, 2]$, then uniformly for $\alpha \ge \frac{\theta + (2\nu-1)}{2\nu}$ and $T\ge 2$ we have
	\[
		N(\alpha, T) \ll T^{\frac{1-\alpha}{1-\theta}\left(2+2\nu-4\nu\frac{1-\alpha}{1-\theta}\right)}(\log T)^{9}.
	\]
	\item If $\nu \in (1, 3/2]$ then we have for each $\delta>0$ uniformly for $\alpha \ge \alpha(\nu) + \delta$, $T\ge 2$
	\[
		N(\alpha, T) \ll_{\delta} T^{\frac{1-\alpha}{1-\theta}\left(2+2\nu-4\nu\frac{1-\alpha}{1-\theta}\right)}(\log T)^{9},
	\]
	where $\alpha(\nu)$ is given by \eqref{eq: alpha_{+}}.
\end{enumerate}
\end{theorem}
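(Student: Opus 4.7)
The plan is a standard zero-detection argument adapted to the Beurling setting, with the classical mean value theorem for Dirichlet polynomials replaced by its generalized analogue in Corollary \ref{cor: large values}. Fix $\nu \in (1,2]$ and introduce the mollifier $M_{X}(s) = \sum_{n_{k}\le X}\mu(n_{k})/n_{k}^{s}$ with a parameter $X\ge 1$ to be optimized at the end. Approximating $\zeta(s)$ by a partial sum of length $T^{\nu/(1-\theta)}$ via Lemma \ref{lem: approximation zeta} and multiplying by $M_X$ produces an identity of the shape $\zeta(s)M_{X}(s) = 1 + \sum_{X<n_{k}\le XT^{\nu/(1-\theta)}}a_{k} n_{k}^{-s} + E(s)$, where $\abs{a_{k}}\le d(n_{k})$. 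Under the three conditions \eqref{eq: condition X 1}--\eqref{eq: condition X 3} on $X$, the error $E(s)$ is at most $1/2$ in modulus for large $T$, so at every zero $\rho=\beta+\I\gamma$ of $\zeta$ with $\beta\ge\alpha$ and $T\le \abs{\gamma}\le 2T$ the remaining Dirichlet-type sum has modulus at least $1/2$.

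Next I would split this sum dyadically into $L=O(\log T)$ pieces $D_{l}(s)$ of length $N=2^{l}X$. Pigeonholing gives $\abs{D_{l}(\rho)}\ge 1/(2(L+1))$ for at least one index $l$. Using Lemma \ref{lem: local zeroes} to extract a $1$-separated subset $\mathcal{R}$ of zeros from the strip $\beta\ge\alpha$, $T\le\abs{\gamma}\le 2T$, one has $N(\alpha,2T)-N(\alpha,T) \ll \abs{\mathcal{R}}\log T$, and letting $R_{l}$ denote the number of zeros in $\mathcal{R}$ on which $D_{l}$ is large, I obtain $N(\alpha,2T)-N(\alpha,T)\ll (\log T)\sum_{l}R_{l}$.

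Each $R_{l}$ I would bound via Corollary \ref{cor: large values} with $\eta=(2N)^{\theta-1}$, $\delta=1$, and $V=1/(2(L+1))$. The well-behavedness \eqref{eq: well-behaved} yields the pointwise estimate $\chi(n_{k},\eta N)\ll n_{k}^{\theta}$, while the coefficient sum $\sum_{N<n_{k}\le 2N} d(n_{k})^{2}n_{k}^{-2\alpha}$ is handled by Knopfmacher's Lemma \ref{lem: divisor estimate} with $j=2$. A direct computation then gives $R_{l}\ll (\log T)^{6}\bigl(TX^{-(2\alpha-1-\theta)} + X^{2(1-\alpha)}T^{2\nu(1-\alpha)/(1-\theta)}\bigr)$, and the balancing choice $X = T^{\frac{1}{1-\theta}(1-2\nu(1-\alpha)/(1-\theta))}$ produces the advertised exponent $\tfrac{1-\alpha}{1-\theta}\bigl(2+2\nu-4\nu\tfrac{1-\alpha}{1-\theta}\bigr)$, with a further factor of $\log T$ picked up from summing over $l$ and a second from the reduction $N(\alpha,T)\ll \sum_{T_{j}}(N(\alpha,2T_{j})-N(\alpha,T_{j}))$ over dyadic heights.

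The main obstacle, and the step driving the case split in the theorem, is verifying that the chosen $X$ satisfies \eqref{eq: condition X 1}--\eqref{eq: condition X 3}. Condition \eqref{eq: condition X 1} is automatic; \eqref{eq: condition X 2} translates to the linear threshold $\alpha\ge (\theta+2\nu-1)/(2\nu)$; and \eqref{eq: condition X 3}, applied at $\sigma=\beta\ge\alpha$, reduces to the quadratic inequality $F_{\nu}(\alpha)>0$ with discriminant $D_{\nu}=(1-\theta)^{2}(-7\nu^{2}+10\nu+1)$ and largest root $\alpha(\nu)$ as in \eqref{eq: alpha_{+}}. Comparing $\alpha(\nu)$ with $(\theta+2\nu-1)/(2\nu)$ shows that for $\nu>3/2$ the linear threshold dominates, so \eqref{eq: condition X 3} is automatic in its range and one obtains (i) without any extra $\delta$; for $1<\nu\le 3/2$ the quadratic root $\alpha(\nu)$ governs and one must insert the buffer $\alpha\ge\alpha(\nu)+\delta$, giving (ii).
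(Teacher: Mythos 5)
Your proposal is correct and follows essentially the same zero-detection argument as the paper, with the same mollifier $M_X$, the same dyadic decomposition into $D_l$, the same application of Corollary \ref{cor: large values} with $\eta = (2N)^{\theta-1}$, $\delta = 1$, $V = 1/(2(L+1))$, the same balancing choice of $X$, and the same verification of conditions \eqref{eq: condition X 1}--\eqref{eq: condition X 3} giving the case split at $\nu = 3/2$. One very small imprecision: you phrase the case distinction as ``comparing $\alpha(\nu)$ with $(\theta+2\nu-1)/(2\nu)$,'' but for $\nu > (5+\sqrt{32})/7 \approx 1.52$ the discriminant $D_\nu$ is negative, so $\alpha(\nu)$ is not real and there is nothing to compare; in that subrange \eqref{eq: condition X 3} holds because $F_\nu > 0$ identically, and the comparison is only needed for $3/2 < \nu \le (5+\sqrt{32})/7$.
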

In the first estimate, the implicit constant depends on $\nu$ if $\nu \to 3/2^{+}$, whereas the second estimate is uniform in $\nu$.

To prove Theorem \ref{th: zero-density}, we will, for each fixed $\alpha$, select $\nu$ so that the exponent provided by Theorem \ref{th: zero-density nu} is minimal. The best choice is to pick $\nu < 3/2$ arbitrarily close to $\nu(\alpha)$, where $\nu(\alpha)$ is such that $\alpha = \alpha\bigl(\nu(\alpha)\bigr)$. Writing $\alpha = (1-\mu)\theta+\mu$, we find 
\[
	\nu(\alpha) = \frac{2-\mu}{2\mu^{2}-3\mu+2}.
\]	
Taking $\nu=\nu(\alpha)$ would yield $N(\alpha,T) \ll T^{c(\mu)(1-\mu)}$ with $c(\mu)$ given by \eqref{eq: c(mu)}, were it not that $\alpha=\alpha(\nu)$, violating \eqref{eq: condition X 3}. However, if we take $\nu$ slightly larger than $\nu(\alpha)$, then we can obtain the zero-density estimate with $c(\mu) +\eps$ instead of $c(\mu)$ in the exponent.

Let $\nu \in (1,3/2)$ and $\epsilon > 0$ be a small number. If 
\begin{equation}
\label{eq: condition X epsilon}
	\frac{1-\alpha}{1-\theta}\biggl(1-2\nu\frac{1-\alpha}{1-\theta}\biggr) \le \frac{\nu\alpha-1-(\nu-1)\theta}{1-\theta} -\epsilon,
\end{equation}
then 	\eqref{eq: condition X 3} holds if $T\ge \exp\bigl(\frac{4}{\epsilon}\log\frac{4}{\epsilon}\bigr)$ say, so that $N(\alpha, T) \ll T^{\frac{1-\alpha}{1-\theta}\left(2+2\nu-4\nu\frac{1-\alpha}{1-\theta}\right)}(\log T)^{9}$ for $T\ge T_{0}(\epsilon)$. It is important to note that the implicit constant here is independent of $\nu$. The inequality \eqref{eq: condition X epsilon} is equivalent with $F_{\nu}(\alpha) \ge \epsilon(1-\theta)^{2}$. Now
\[
	F_{\nu}'\bigl(\alpha(\nu)\bigr) = (1-\theta)\sqrt{-7\nu^{2}+10\nu+1} \ge \frac{1-\theta}{2},
\]
so that \eqref{eq: condition X epsilon} certainly holds if 
\[
	\alpha \ge \alpha(\nu) + 2\epsilon(1-\theta).
\]
A quick calculation shows that we have equality here if 
\[
	\nu = \nu(\alpha, \epsilon) \coloneqq \frac{2-\mu+2\epsilon}{2\mu^{2}-3\mu+2 - 2\epsilon(4 \mu - 3-4\epsilon)},
\] 
where again $\alpha = (1-\mu)\theta+\mu$. This choice for $\nu$ yields the zero-density estimate with exponent $c(\mu, \epsilon)(1-\mu)$, where
\[
	c(\mu,\epsilon)\coloneqq \frac{4\mu + 8\epsilon(1-\mu+2\epsilon)}{2\mu^{2}-3\mu+2-2\epsilon(4\mu - 3-4\epsilon)} 
	= c(\mu)\biggl\{1 + \biggl(\frac{8\mu -6}{2\mu^{2}-3\mu+2} + \frac{2(1-\mu)}{\mu}\biggr)\epsilon + O(\epsilon^{2})\biggr\}.
\]
Suppose now that we have given $\eps>0$, $\delta>0$. Then we can choose an $\epsilon>0$ independent of $\mu$ so small that $c(\mu, \epsilon) \le c(\mu)+\eps$ for $2/3\le\mu\le1$. For $\mu \ge 2/3 + \delta$ we take $\nu = \nu\bigl((1-\mu)\theta+\mu, \epsilon\bigr)$. By, if necessary, choosing a smaller value of $\epsilon$ (depending on $\delta$) we can force $\nu < 3/2$. With this choice of $\nu$, the condition \eqref{eq: condition X epsilon} holds, so
\[
	N(\alpha, T) = N\bigl((1-\mu)\theta+\mu, T\bigr) \ll T^{(c(\mu)+\eps)(1-\mu)}(\log T)^{9},
\]
for $\mu \ge 2/3+\delta$ and $T\ge T_{0}(\epsilon)$, where the choice of $\epsilon$ depends only on $\eps$ and $\delta$. This concludes the proof of Theorem \ref{th: zero-density}.

\begin{remark}
The proof of R\'{e}v\'{e}sz's zero-density estimate \eqref{eq: zero-density Szilard} avoids mean value theorems and works instead with (a variant of) the Hal\'{a}sz--Montgomery inequality to bound a sum over zeros $\rho$ of certain zero-detecting polynomials. One can also follow this approach here, although the resulting zero-density estimate is weaker than \eqref{eq: zero-density estimate}. Let us sketch the line of reasoning.

Enumerate the zeros counted by $R_{l}$ as $\rho_{r} = \beta_{r} + \I \gamma_{r}$, $r=1,\dotsc, R_{l}$, according to increasing imaginary part, so with $\gamma_{r+1}-\gamma_{r}\ge1$. Then
\[
	R_{l} \ll L\sum_{r=1}^{R_{l}}\abs{D_{l}(\rho_{r})}.
\]

 Applying the Hal\'asz--Montgomery inequality (see e.g.\ \cite[Lemma 1.7]{Montgomery71}) gives
 \begin{align*}
 	R_{l}^{2} 	&\ll \biggl(\sum_{N<n_{k}\le 2N}\abs{a_{n}}^{2}\biggr)\Biggl(\sum_{r,s}\abs[4]{\sum_{N<n_{k}\le 2N}\frac{1}{n_{k}^{\beta_{r}+\beta_{s} + \I(\gamma_{r}-\gamma_{s})}}}\Biggr)T^{\eps} \\
			&\ll N^{1+\eps}\biggl(R_{l}N^{1-2\alpha} + \sum_{r\neq s}\biggl(\frac{N^{1-2\alpha}}{\abs{\gamma_{r}-\gamma_{s}}} + \abs{\gamma_{r}-\gamma_{s}}N^{\theta-2\alpha}\biggr)\biggr)T^{\eps} \\
			&\ll N\biggl(R_{l}N^{1-2\alpha} + R_{l}(\log R_{l})N^{1-2\alpha} + R_{l}^{2}TN^{\theta-2\alpha}\biggr)T^{\eps}.
 \end{align*}
 In the second line we used $\abs{a_{k}} \le d(n_{k})$, Lemma \ref{lem: divisor estimate}, the hypothesis \eqref{eq: well-behaved} and integration by parts to estimate $\sum n_{k}^{-\sigma - \I t}$ for $t\neq 0$. In the third line we exploited the separation of the $\gamma_{r}$: $\abs{\gamma_{r}-\gamma_{s}} \ge \abs{r-s}$. Hence
 \[
 	R_{l} \ll \bigl( N^{2-2\alpha} + TN^{1+\theta-2\alpha} R_{l}\bigr)T^{\eps}.
\]
If now $T \ll N^{2\alpha-1-\theta-\eps}$, then we get the bound $R_{l} \ll N^{2-2\alpha}T^{\eps}$ say. Now we apply Huxley's trick: we divide the interval $[T,2T]$ in subintervals of length $T_{1}$ with $T_{1} \ll N^{2\alpha-1-\theta-\eps}$ and apply the above argument to each subinterval. We obtain
\[
	R_{l} \ll N^{2-2\alpha}\bigl(1+TN^{1+\theta-2\alpha+\eps}\bigr)T^{\eps} = \bigl(N^{2-2\alpha} + TN^{3+\theta-4\alpha}\bigr)T^{\eps}. \\
\]
In order to obtain a non-trivial result, we now have to assume that $\alpha\ge (\theta+3)/4$. Using $X\le N\le XT^{\frac{\nu}{1-\theta}}$, we find
\[
	R_{l} \ll \bigl((XT^{\frac{\nu}{1-\theta}})^{2-2\alpha} + TX^{3+\theta-4\alpha}\bigr)T^{\eps}.
\]
Selecting the optimal $X=T^{\frac{1}{1-\theta}\left(1-\frac{(\nu-1)(2-2\alpha)}{2\alpha-\theta-1}\right)}$ yields the zero-density estimate
\begin{equation}
\label{eq: zero-density with Halasz-Montgomery}
	N(\alpha,T) \ll_{\eps} T^{\frac{1-\alpha}{1-\theta}\left(4+2(\nu-1) - 4\frac{(\nu-1)(1-\alpha)}{2\alpha-\theta-1}\right)+\eps},
\end{equation}
possibly with different $\varepsilon > 0$. Again the conditions \eqref{eq: condition X 1}--\eqref{eq: condition X 3} determine the range of validity of the above estimate. Condition \eqref{eq: condition X 1} holds for each $\nu\ge1$, and condition \eqref{eq: condition X 2} again gives the range $\alpha \ge \frac{\theta+(2\nu-1)}{2\nu}$. The final condition \eqref{eq: condition X 3} implies in a similar way as before the positivity of a certain quadratic polynomial, this time given by
\[
	4\nu\alpha^{2} - \bigl(5\nu+1+(3\nu-1)\theta\bigr)\alpha + \bigl(2\nu + (1+\nu)\theta + (\nu-1)\theta^{2}\bigr).
\]
This polynomial is positive if $\nu > (5+\sqrt{32})/7$, and otherwise its largest zero is
\[
	\tilde{\alpha}(\nu) = \frac{1}{8\nu}\Bigl(5\nu+1+(3\nu-1)\theta + (1-\theta)\sqrt{-7\nu^{2}+10\nu+1}\Bigr).
\]
One may verify that $\tilde{\alpha}(\nu) \ge \alpha(\nu)$ with equality if and only if $\nu=1$. Hence, for each fixed $\nu \in [1,2]$, the zero-density estimate \eqref{eq: zero-density with Halasz-Montgomery} obtained via the Hal\'asz--Montgomery estimate is inferior to the one obtained by the mean value theorem (Theorem \ref{th: zero-density nu}) both in terms of the value of the exponent and its range of validity. Furthermore the Hal\'asz--Montgomery approach is only valid for $\alpha > (\theta+3)/4$, whereas the mean-value theorem approach is able to reach $\alpha > (\theta+2)/3$.
\end{remark}

\begin{remark}
When $\theta=0$, we can compare our zero-density estimate 
\[
	N(\alpha, T) \ll T^{\frac{4(\alpha+\eps)(1-\alpha)}{2\alpha^{2}-3\alpha+2}}(\log T)^{O(1)}
\]
to Carlson's classical estimate $N(\alpha,T) \ll T^{4\alpha(1-\alpha)}(\log T)^{O(1)}$ for the Riemann zeta function. For each fixed $\alpha$, Carlson's estimate is superior to our estimate (although when $\alpha\to1^{-}$, they are of similar quality\footnote{Incidentally, as we shall derive in Section \ref{sec: PNTshort} both Carlson's and our zero-density result both yield the range $\lambda > 3/4$ for the PNT in short intervals for the classical primes.}, both exponents being $\sim 4(1-\alpha)$). Furthermore, Carlson's estimate has effective range $\alpha>1/2$, while our estimate only goes up to $2/3$. The reason we are not quite able to reach Carlson's result is that the Riemann zeta function can be much better approximated by its partial sums. Indeed, it is well known that by a basic van der Corput lemma (which relies on the equal spacing of the integers!) one has for example (see e.g.\ \cite[Theorem 4.11]{Titchmarsh})
\[
	\zeta(s) = \sum_{n=1}^{\lfloor T\rfloor}\frac{1}{n^{s}} + O(T^{-\sigma}),
\]
uniformly for $\sigma\ge\sigma_{0}>0$ and $1 \le \abs{t}\le T$.
Stronger versions of Lemma \ref{lem: approximation zeta} would deliver better zero-density estimates, but it seems unlikely the lemma can be improved for general Beurling number systems admitting $\theta$-well-behaved integers.

Subsequent improvements of Carlson's estimate rely on deeper properties of the Riemann zeta function (such as the fourth power moment estimate, or subconvexity bounds), unavailable for general Beurling number systems.
\end{remark}

\bigskip

One might expect that a better knowledge of the local distribution of the generalized integers yields a better zero-density estimate. This is indeed the case; as an example we show an improvement assuming the \emph{average Ramanujan condition \eqref{eq: average Ramanujan}}. In conjunction with the assumption that $n_{k}\in \N$, this hypothesis allowed R\'ev\'esz to obtain his first zero-density theorem \cite[Theorem 4]{Reveszzerodensity}. We remark that the exponent in \eqref{eq: density with Ramanujan} still improves the exponent $(6-2\theta)(1-\alpha)/(1-\theta)$ obtained by R\'ev\'esz even though we do not assume the restrictive $n_k \in \mathbb{N}$.

\begin{theorem}
Let $(\MP,\MN)$ be a Beurling generalized number system satisfying \eqref{eq: well-behaved} for some $A>0$ and $\theta\in [0,1)$, and:
\begin{equation}
\label{eq: average Ramanujan}
	\exists p>1: \forall \eps>0: \sum_{n_{k}\le x}\chi(n_{k},1)^{p} \ll_{\eps}x^{1+\eps}.
\end{equation}
Then for every $\eps>0$ and $\delta>0$ there exists $T_{0}=T_{0}(\eps,\delta)$ such that uniformly for $T\ge T_{0}$, $\alpha \ge (\theta+2)/3 + \delta$:
\begin{equation}
\label{eq: density with Ramanujan}	
	N(\alpha,T) \ll_{\eps,p} T^{\frac{1-\alpha}{2\alpha^{2}-3\alpha+2-\theta}(4\alpha-2\theta)+\eps}.
\end{equation}
\end{theorem}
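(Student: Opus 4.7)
The strategy is to follow the proof of Theorem~\ref{th: zero-density} with essentially one modification: the parameter $\eta$ in Corollary~\ref{cor: large values} is chosen so that the localization window $\eta N$ shrinks to $O(1)$, at which point the average Ramanujan condition \eqref{eq: average Ramanujan} replaces the pointwise bound $\chi(n_{k},\lambda)\ll n_{k}^{\theta}$ used before. I would fix $\nu\in(1,3/2)$, invoke Lemma~\ref{lem: approximation zeta}, introduce the mollifier $M_{X}$, perform the dyadic decomposition of $\zeta(s)M_{X}(s)-1$ into sums $D_{l}(s)$ over $n_{k}\in(N,2N]$ with $X\le N\ll XT^{\nu/(1-\theta)}$, and reduce to counting the number $R_{l}$ of $1$-separated zeros $\rho$ with $\beta\ge\alpha$ and $\abs{D_{l}(\rho)}\ge 1/(2(L+1))$, exactly as in Section~\ref{sec: proofzero}.

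The new ingredient enters when applying Corollary~\ref{cor: large values} to $D_{l}$: taking $\eta\asymp 1/N$ gives $\eta N\asymp 1$ and
\[
	R_{l} \ll (\log T)^{O(1)}(T+N)\sum_{N<n_{k}\le 2N}\chi(n_{k},1)\frac{d(n_{k})^{2}}{n_{k}^{2\alpha}}.
\]
For the inner sum I apply H\"{o}lder's inequality with $p$ from \eqref{eq: average Ramanujan} and its conjugate $p'$:
\[
	\sum_{n_{k}\le 2N}\chi(n_{k},1)d(n_{k})^{2} \ll \Bigl(\sum_{n_{k}\le 2N}\chi(n_{k},1)^{p}\Bigr)^{1/p}\Bigl(\sum_{n_{k}\le 2N}d(n_{k})^{2p'}\Bigr)^{1/p'}\ll_{\eps,p} N^{1+\eps},
\]
where the first factor is controlled by \eqref{eq: average Ramanujan} and the second by dominating $d(n_{k})^{2p'}\le d(n_{k})^{2j}$ for an integer $j\ge p'$ and invoking Lemma~\ref{lem: divisor estimate}. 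This yields
\[
	R_{l}\ll \bigl(TX^{1-2\alpha}+X^{2-2\alpha}T^{\frac{2\nu(1-\alpha)}{1-\theta}}\bigr)T^{\eps},
\]
which improves the bound of Section~\ref{sec: proofzero} in the $T$-term (from $TX^{1+\theta-2\alpha}$ to $TX^{1-2\alpha}$).

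Balancing the two contributions with $X=T^{1-2\nu(1-\alpha)/(1-\theta)}$ produces $R_{l}\ll T^{2(1-\alpha)[\nu(2\alpha-1)+(1-\theta)]/(1-\theta)+\eps}$, and the problem reduces to optimizing $\nu$ subject to the conditions \eqref{eq: condition X 1}--\eqref{eq: condition X 3}. As in Section~\ref{sec: proofzero} the binding constraint is \eqref{eq: condition X 3}, which here becomes $\nu>\nu_{0}(\alpha)\coloneqq (2-\alpha)(1-\theta)/(2\alpha^{2}-3\alpha+2-\theta)$. Taking $\nu$ infinitesimally above $\nu_{0}(\alpha)$ and simplifying---the algebra $(2-\alpha)(2\alpha-1)+(2\alpha^{2}-3\alpha+2-\theta)=2\alpha-\theta$ collapses the numerator---produces exactly the exponent in \eqref{eq: density with Ramanujan}. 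Condition \eqref{eq: condition X 2} translates to $\nu_{0}(\alpha)\le(1-\theta)/(2(1-\alpha))$, equivalent to $\alpha\ge(\theta+2)/3$, and the hypothesis $\alpha\ge(\theta+2)/3+\delta$ keeps $\nu_{0}(\alpha)$ bounded away from $3/2$. The main subtlety, handled exactly as in the passage from Theorem~\ref{th: zero-density nu} to Theorem~\ref{th: zero-density}, is to upgrade \eqref{eq: condition X 3} to a quantitative inequality $F_{\nu}(\alpha)\ge\epsilon(1-\theta)^{2}$ by perturbing $\nu$ slightly upwards; the perturbation costs only a factor $T^{\eps}$ in the final exponent and makes $T_{0}$ depend only on $\eps$ and $\delta$.
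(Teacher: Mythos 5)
Your proposal is correct and follows essentially the same route as the paper's proof: apply Corollary \ref{cor: large values} with $\eta=1/N$ so the localization window is $O(1)$, control $\sum\chi(n_{k},1)d(n_{k})^{2}$ via H\"{o}lder together with \eqref{eq: average Ramanujan} and Lemma \ref{lem: divisor estimate}, balance with $X=T^{1-2\nu(1-\alpha)/(1-\theta)}$, and optimize $\nu$ near the critical value $\nu_{0}(\alpha)=(1-\theta)(2-\alpha)/(2\alpha^{2}-3\alpha+2-\theta)$ determined by condition \eqref{eq: condition X 3}. The only cosmetic differences are that you write $\eta\asymp 1/N$ where you should take $\eta=1/N$ exactly to land on $\chi(n_{k},1)$, and you handle the non-integer conjugate index by dominating $d^{2p'}\le d^{2j}$ rather than by shrinking $p$ as the paper does; both are fine.
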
 

The exponent $\frac{1-\alpha}{2\alpha^{2}-3\alpha+2-\theta}(4\alpha-2\theta)$ is smaller than $(1-\mu)c(\mu)$ provided by Theorem \ref{th: zero-density} if $\theta>0$. For $\theta=0$, \eqref{eq: average Ramanujan} holds a fortiori and no improvement is achieved then.

\begin{proof}
We apply Corollary \ref{cor: large values} with $\eta=1/N$. This gives
\[
	R_{l} \ll_{\eps} T^{\eps}(T+N)N^{-2\alpha}\sum_{N<n_{k}\le2N}\chi(n_{k},1)d(n_{k})^{2}. 
\]
From H\"older's inequality\footnote{If necessary, one may replace $p$ with a smaller $p' > 1$ whose conjugate index is an integer.}, \eqref{eq: average Ramanujan} and Lemma \ref{lem: divisor estimate} we infer that the sum is $\ll_{\eps,p} N^{1+\eps}$. Inserting the range $X\le N\le XT^{\frac{\nu}{1-\theta}}$, we get
\[
	R_{l} \ll_{\eps,p} \Bigl(TX^{-(2\alpha-1)}+\bigl(T^{\frac{\nu}{1-\theta}}X\bigr)^{2-2\alpha}\Bigr)T^{\eps}.
\]
The optimal choice is now $X=T^{1-2\nu\frac{1-\alpha}{1-\theta}}$, which yields
\[
	R_{l}\ll_{\eps,p} T^{\frac{1-\alpha}{1-\theta}(4\nu\alpha+2-2\nu-2\theta)+\eps},
\]
and therefore
\begin{equation}
\label{eq: estimate nu with Ramanujan}
	N(\alpha,T) \ll_{\eps,p} T^{\frac{1-\alpha}{1-\theta}(4\nu\alpha+2-2\nu-2\theta)+\eps}.
\end{equation}
It is immediately clear that \eqref{eq: condition X 1} holds. The condition \eqref{eq: condition X 2} is again equivalent with $\alpha \ge \frac{\theta+(2\nu-1)}{2\nu}$, and \eqref{eq: condition X 3} is fulfilled if  
\[
	2\nu\alpha^{2} + (1-\theta-3\nu)\alpha + \bigl(2(\nu-1) +(2-\nu)\theta\bigr) > 0.
\]
When this polynomial has a non-negative discriminant, in particular when $\nu \leq 3/2$, its largest root is 
\[
	\tilde{\tilde{\alpha}}(\nu) = \frac{1}{4\nu}\Bigl(3\nu+\theta-1 + \sqrt{(8\theta-7)\nu^{2}+10(1-\theta)\nu+(1-\theta)^{2}}\Bigr).
\]
We have $\tilde{\tilde{\alpha}}(\nu) \ge \frac{\theta+(2\nu-1)}{2\nu}$ if and only if $\nu \le 3/2$. For a given $\alpha > (\theta+2)/3$, we have 
\[
	\tilde{\tilde{\alpha}}(\nu) = \alpha \iff \nu =\frac{(1-\theta)(2-\alpha)}{2\alpha^{2}-3\alpha+2-\theta}.
\]
 Inserting this value for $\nu$ in \eqref{eq: estimate nu with Ramanujan} yields the exponent
 \[
 	\frac{(1-\alpha)(4\alpha-2\theta)}{2\alpha^{2}-3\alpha+2-\theta}+\eps.
\]
Selecting $\nu$ slightly to the right of $\tilde{\tilde{\alpha}}(\nu)$ in a similar fashion as in the proof of Theorem \ref{th: zero-density} then gives the result. 
\end{proof}
\begin{remark}
An artefact of the assumption \eqref{eq: average Ramanujan} is the presence of the factor $T^{\eps}$. For $\alpha\to1^{-}$, it would then still be advantageous to employ the estimate of Theorem \ref{th: zero-density}. However, replacing \eqref{eq: average Ramanujan} by an ``$\eps$-free'' assumption (e.g. with some power of $\log x$) naturally leads to an estimate with $T$ to the power $(\tilde{c}(\alpha)+\eps)(1-\alpha)$ instead of $\tilde{c}(\alpha)(1-\alpha) + \eps$, times a certain power of $\log T$, $\tilde{c}(\alpha)$ being the function appearing in the exponent in \eqref{eq: density with Ramanujan}. One actually needs such a zero-density result for the application of the PNT in short intervals.
\end{remark}

\section{The PNT in short intervals} \label{sec: PNTshort}
One of the interesting applications of zero-density estimates for the Riemann-zeta function is the PNT in short intervals. Denoting the Chebyshev function by $\psi$, and given $0< \lambda < 1$, we say that the PNT holds in intervals of length at least $x^{\lambda}$ if 
\begin{equation}
\label{eq: PNT short intervals}
	\psi(x+h) - \psi(x) \sim h, \quad \text{when $h \gg x^{\lambda}$ and $x\to \infty$.}
\end{equation}

For the rational primes, \eqref{eq: PNT short intervals} was first shown to hold for some $\lambda < 1$ by Hoheisel \cite{Hoheisel}, while the currently best known range for $\lambda$ is $\lambda>7/12$, a consequence of the Ingham--Huxley zero-density estimate $N(\alpha,T) \ll T^{\frac{12}{5}(1-\alpha)}(\log T)^{O(1)}$ for the Riemann zeta function \cite{Huxley, Ingham37, Ingham40}. The starting point for proving \eqref{eq: PNT short intervals} is an explicit formula $\psi(x) \approx x - \sum_{\rho}\frac{x^{\rho}}{\rho}$, where the sum is taken over the zeta-zeros. The density estimate is then able to deliver bounds on the contribution from these zeros. In the Beurling setting, we also have such an explicit Riemann--von Mangoldt-type formula \cite[Theorem 5.1]{Reveszbasicformulas}\footnote{In \cite[Theorem 5.1]{Reveszbasicformulas}, a term $x/t_{k}$ is missing in the error term. This was corrected in a later paper by R\'ev\'esz, see \cite[Lemma 10]{Reveszzerodensity}. In the two quoted results, the sum is over the zeros lying to the right of a certain broken line $\Gamma_{b}$ in the strip $(\theta+b)/2 \le \sigma \le b$; however, in view of Lemma \ref{lem: local zeroes}, one readily sees that the contribution from the zeros with $(\theta+b)/2 \le \beta\le b$ can be absorbed in the error term. Furthermore Lemma \ref{lem: local zeroes} also shows the formula holds for all $T$ and not only on the sequence $t_k$ defined in R\'ev\'esz's paper.}
\begin{theorem}[R\'ev\'esz]
\label{th: explicit formula}
Let $(\MP,\MN)$ be a Beurling generalized number system with $\theta$-well-behaved integers, and let $b \in (\theta,1)$ and $4\le T\le x$. Then 
\[
	\psi(x) = x - \sum_{\substack{\beta\ge b,\\ \abs{\gamma}\le T}}\frac{x^{\rho}}{\rho} + O_{b}\Bigl\{\Bigl(x^{b} + \frac{x}{T}\Bigr)(\log x)^{3}\Bigr\},
\]
where the sum is over the zeros $\rho=\beta+\I\gamma$ of the associated Beurling zeta function.
\end{theorem}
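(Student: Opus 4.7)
The plan is to adapt the classical derivation of the Riemann--von Mangoldt explicit formula to the Beurling setting, combining Perron's formula with a contour shift, and invoking Lemma \ref{lem: local zeroes} both to bound $\zeta'/\zeta$ on the horizontal segments at the arbitrary height $T$ and to absorb the contribution from zeros lying in the intermediate strip $(\theta+b)/2 \le \beta < b$ into the error term.

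First I would apply the truncated Perron formula to the Dirichlet series $-\zeta'(s)/\zeta(s) = \sum_{k}\Lambda(n_{k})n_{k}^{-s}$ on the line $\Re s = c$, with $c = 1 + 1/\log x$. The hypothesis $N(x) = Ax + O(x^{\theta})$ controls the local mass of $\psi$ near $x$, yielding
\[
	\psi(x) = -\frac{1}{2\pi\I}\int_{c-\I T}^{c+\I T}\frac{\zeta'(s)}{\zeta(s)}\frac{x^{s}}{s}\dif s + O\Bigl(\frac{x(\log x)^{2}}{T}\Bigr).
\]
I would then shift the path of integration to the vertical line $\Re s = (\theta+b)/2$. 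The residue theorem picks up $x$ at the simple pole of $\zeta$ at $s=1$, an $O(1)$ contribution at $s=0$, and a residue $-x^{\rho}/\rho$ at each zero $\rho = \beta+\I\gamma$ of $\zeta$ inside the rectangle, i.e.\ with $(\theta+b)/2 \le \beta$ and $\abs{\gamma}\le T$. The zero-free region \eqref{eq: zero-free region} rules out any obstruction near $s=1$.

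The analytical content is bounding the three new contour segments. On the shifted vertical line, I would use the local partial-fraction decomposition $\zeta'(s)/\zeta(s) = \sum_{\abs{t-\gamma}\le 1}(s-\rho)^{-1} + O(\log(\abs{t}+2))$, obtained from Jensen/Borel--Carath\'eodory arguments applied to $(s-1)\zeta(s)$ in discs contained in the half-plane $\sigma>\theta$; Lemma \ref{lem: local zeroes} bounds the local sum by $O(\log^{2}(\abs{t}+2))$, and integrating delivers a vertical contribution $\ll x^{(\theta+b)/2}\log^{2}x$, itself $\ll x^{b}\log^{2}x$. For the horizontal segments, since $T$ is arbitrary I would first use Lemma \ref{lem: local zeroes} to shift $T$ within $[T, T+1]$ to a height $T'$ lying at distance $\gg 1/\log T$ from every zero; the same decomposition then yields $\abs{\zeta'(s)/\zeta(s)} \ll \log^{2}T$ on those segments, giving a contribution $\ll (x/T)(\log x)^{3}$. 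The correction $T \leftrightarrow T'$ changes the collected sum of residues by at most $\sum_{T\le\abs{\gamma}\le T+1}x^{\beta}/\abs{\rho}$, which is $\ll x\log T/T$ by another application of Lemma \ref{lem: local zeroes}, and is thus absorbed.

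Finally, the zeros with $(\theta+b)/2 \le \beta < b$ appear among the residues collected above but not in the stated main sum, so they must be absorbed into the error term. By Lemma \ref{lem: local zeroes}, each unit window $\abs{\gamma}\in [n,n+1]$ contains $O_{b}(\log n)$ such zeros, each contributing $\ll x^{b}/(n+1)$; summing over $\abs{n}\le T$ gives $O_{b}(x^{b}\log^{2}T) = O_{b}(x^{b}\log^{3}x)$. Combining all three contributions produces the stated $O_{b}\{(x^{b}+x/T)(\log x)^{3}\}$. The principal obstacle is obtaining a workable pointwise bound on $\zeta'/\zeta$ in the absence of a functional equation and for arbitrary $T$; it is precisely at this step that Lemma \ref{lem: local zeroes} is decisive, simultaneously controlling the local density of zeros on the shifted vertical line, allowing the horizontal height to be adjusted to avoid zeros, and permitting the low-lying ``intermediate'' zeros to be swept into the error.
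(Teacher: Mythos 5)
The paper does not supply a proof of this theorem; it imports it from R\'ev\'esz \cite[Theorem 5.1]{Reveszbasicformulas} and explains in a footnote how Lemma~\ref{lem: local zeroes} lets one (i) correct the error term, (ii) pass from R\'ev\'esz's broken-line contour $\Gamma_b$ to the stated vertical one, and (iii) extend the formula from a special sequence of heights to arbitrary $T$. Your blind reconstruction follows the classical explicit-formula template and correctly isolates exactly these uses of Lemma~\ref{lem: local zeroes}: shifting $T$ to a zero-avoiding $T'\in[T,T+1]$, and absorbing the zeros with $(\theta+b)/2\le\beta<b$ into the error.

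There is, however, a genuine gap on the shifted vertical contour. You claim that the local decomposition $\zeta'(s)/\zeta(s)=\sum_{|t-\gamma|\le1}(s-\rho)^{-1}+O(\log(|t|+2))$, combined with Lemma~\ref{lem: local zeroes}, gives a \emph{pointwise} bound $\ll\log^2(|t|+2)$ along $\Re s=(\theta+b)/2$. Lemma~\ref{lem: local zeroes} controls the \emph{number} of nearby zeros, not their \emph{distance} to the contour: a zero $\rho=\beta+\I\gamma$ with $\beta$ equal to, or very close to, $(\theta+b)/2$ makes $|s-\rho|^{-1}$ unbounded on that line, and the integral of this term over a unit window behaves like $\log(1/|\sigma-\beta|)$, which is not under control. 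Unlike the Riemann case, where one escapes to $\sigma=-1$ away from all nontrivial zeros, no functional equation guarantees a zero-free abscissa here; indeed the paper's Appendix constructs Beurling systems with infinitely many zeros on a fixed line $\sigma=\beta$, which would make a straight contour through $\sigma=\beta$ catastrophic. This is precisely why R\'ev\'esz integrates over a broken line $\Gamma_b$ woven through the strip $(\theta+b)/2\le\sigma\le b$ so as to stay quantitatively away from every zero. You apply exactly the right device to the horizontal segments (move $T$ to a good $T'$); the analogous device is required on the vertical side as well. Either adopt the broken-line contour directly, or average the vertical integral over abscissae $\sigma_0\in[(\theta+b)/2,b]$ and select a favourable one; in both cases the extra residues from zeros to the left of the chosen contour but with $\beta\ge(\theta+b)/2$ are then absorbed exactly as you already absorb the intermediate zeros. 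As a minor aside, since $b>\theta\ge0$ forces $(\theta+b)/2>0$, the origin $s=0$ is never enclosed by the rectangle, so the $O(1)$ residue you mention there does not occur.
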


Suppose we apply the above theorem with certain $b$ and $T=x^{a}$ for certain $a<1$ to estimate the difference $\psi(x+h) - \psi(x)$. The contribution from a single zero is 
\[
	-\frac{(x+h)^{\rho}-x^{\rho}}{\rho} \ll hx^{\beta-1}.
\]
Suppose now that $\zeta$ has infinitely many zeros $\beta+\I\gamma$ satisfying $\beta \ge 1 - d/\log \gamma$ for some $d>0$. When $\gamma\asymp T= x^{a}$, the contribution from such zeros is potentially as large as $\gg h$ and this might be problematic for proving \eqref{eq: PNT short intervals}. A careful analysis of the example of Diamond, Montgomery, and Vorhauer shows that this obstruction is indeed insurmountable.
\begin{proposition}
\label{prop: DMV-example}
For all $\theta \in (1/2, 1)$, there exists a Beurling generalized number system having $\theta$-well-behaved integers with the following property. For every $\lambda \in [4/5,1)$ there exist two sequences of numbers $(x_{K})_{K}$, $(h_{K})_{K}$ with $x_{K} \to \infty$, $h_{K}\asymp x_{K}^{\lambda}$ for which
\[
	\limsup_{K\to \infty}\frac{\psi(x_{K}+h_{K})-\psi(x_{K})}{h_{K}} > 1, \quad \liminf_{K\to \infty}\frac{\psi(x_{K}+h_{K})-\psi(x_{K})}{h_{K}} <1.
\]
\end{proposition}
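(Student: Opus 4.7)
The strategy is to exploit the specific Beurling system constructed by Diamond, Montgomery, and Vorhauer, whose zeta function, for each $\theta\in(1/2,1)$, admits an infinite sequence of zeros $\rho_{k}=\beta_{k}+\I\gamma_{k}$ with $\gamma_{k}\to\infty$ and $1-\beta_{k}\sim d/\log\gamma_{k}$ for a constant $d=d(\theta)>0$. Given $\lambda\in[4/5,1)$, the plan is to select a DMV zero $\rho_{K}$ of large imaginary part, calibrate $x_{K}$ by requiring $\gamma_{K}\asymp x_{K}^{1-\lambda}$, and set $h_{K}:=x_{K}^{\lambda}$. The pair $\{\rho_{K},\bar\rho_{K}\}$ will produce, via the explicit formula of Theorem~\ref{th: explicit formula}, a contribution to $\psi(x_{K}+h_{K})-\psi(x_{K})$ of magnitude comparable to $h_{K}$, whose sign can be tuned by fine-adjusting $x_{K}$.

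Concretely, I would apply Theorem~\ref{th: explicit formula} with $T:=x_{K}^{1-\lambda}(\log x_{K})^{C}$ for some large $C>3$ and $b$ chosen just above $\max(\theta,1/2)$ but below~$\lambda$. The error term then satisfies $(x_{K}^{b}+x_{K}/T)(\log x_{K})^{3}=o(h_{K})$. Writing $\int_{x_{K}}^{x_{K}+h_{K}}u^{\rho-1}\dif u = h_{K}x_{K}^{\beta-1}\e^{\I\gamma\log x_{K}}\bigl(1+O(\abs{\gamma}h_{K}/x_{K})\bigr)$ for $\abs{\gamma}\leq T$, the pair $\{\rho_{K},\bar\rho_{K}\}$ contributes
\[
	-2h_{K}x_{K}^{\beta_{K}-1}\cos\bigl(\gamma_{K}\log x_{K}+O(1)\bigr),
\]
with $x_{K}^{\beta_{K}-1}\to\e^{-d/(1-\lambda)}>0$ since $\gamma_{K}\asymp x_{K}^{1-\lambda}$ forces $(\beta_{K}-1)\log x_{K}\to-d/(1-\lambda)$. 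Varying $x_{K}$ over an interval of length $\asymp x_{K}/\gamma_{K}\asymp h_{K}$ (which preserves $h_{K}\asymp x_{K}^{\lambda}$) shifts the phase $\gamma_{K}\log x_{K}$ through a full period, so $\cos(\gamma_{K}\log x_{K})$ can be forced arbitrarily close to $\pm1$.

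The decisive step, and the source of the restriction $\lambda\geq 4/5$, is controlling the remaining zeros' contribution so as not to annihilate this oscillation. Invoking Theorem~\ref{th: zero-density} with $c\leq 4+\eps$, splitting the sum at $\abs{\gamma}\asymp x_{K}^{1-\lambda}$, applying the per-zero bound $\min(h_{K},x_{K}/\abs{\gamma})\cdot x_{K}^{\beta-1}$, and performing partial summation on $\alpha\mapsto N(\alpha,T)$ delivers an aggregate bulk bound of order $O(h_{K}/(4\lambda-3))$; the underlying integral converges exactly when $1-4(1-\lambda)=4\lambda-3>0$, i.e., $\lambda>3/4$, and the cushion $\lambda\geq 4/5$ absorbs the logarithmic overheads. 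The main obstacle I anticipate is not this boundedness itself, but showing that the background does not itself oscillate in step with the $\rho_{K}$-term when $x_{K}$ is perturbed. This is handled by observing that zeros with $\abs{\gamma}\ll\gamma_{K}$ generate phase shifts $\abs{\gamma}h_{K}/x_{K}\ll 1$ over the admissible variation interval, so their contribution is essentially constant, while at heights $\abs{\gamma}\asymp\gamma_{K}$ the DMV construction places only $O(1)$ further zeros, which can be neutralized by restricting $(\gamma_{K})_{K}$ to a sufficiently sparse subsequence. Consequently $\psi(x_{K}+h_{K})-\psi(x_{K})-h_{K}$ decomposes as a nearly constant background plus an oscillating term of amplitude $\geq c h_{K}$ with $c=c(\lambda,d)>0$, from which the asserted $\limsup>1$ and $\liminf<1$ follow by extracting two subsequences realizing the two extreme signs of the cosine.
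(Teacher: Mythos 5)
Your overall strategy -- isolate a single DMV zero at height $\asymp x_K^{1-\lambda}$, tune $x_K$ to pin the phase, and beat the background -- is the right shape, and the phase-tuning and the computation $(\beta_K-1)\log x_K\to -1/(1-\lambda)$ match what the paper does. But the argument has a genuine gap in the step where you control the "bulk."

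You propose to bound the aggregate contribution from all other zeros by applying Theorem~\ref{th: zero-density} and partial summation, arriving at $O(h_K/(4\lambda-3))$. That estimate is not correct. Without a Littlewood-type zero-free region, the partial summation against $N(\sigma,x^a)\ll x^{ac(1-\sigma)}(\log x)^{L}$ is dominated by $\sigma$ near~$1$ and produces a bound of size $\asymp h_K(\log x_K)^{L}$, not a bounded multiple of $h_K$; the DMV example deliberately has zeros on the curve $\sigma=1-1/\log|t|$, so no such zero-free region is available to suppress the logarithms. Indeed, this failure is exactly the phenomenon that makes Proposition~\ref{prop: DMV-example} interesting, as the paper stresses at the top of Section~\ref{sec: PNTshort}. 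Even setting aside the logarithmic losses, a bound of $O(h_K)$ on the background would not suffice: the amplitude of the targeted $\rho_{K}$-term is only $\asymp h_K\,\e^{-1/(1-\lambda)}$, which can easily be swallowed by an unspecified $O(h_K)$ error. Your "nearly constant background" fix does not repair this; it can only argue that the bulk does not oscillate, not that it is small or of a controllable sign, and a large constant background of one sign would defeat the claim that both the $\limsup$ exceeds $1$ \emph{and} the $\liminf$ is below $1$.

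The paper circumvents all of this by never invoking the zeta-zero explicit formula of Theorem~\ref{th: explicit formula} at all. Instead, it reduces to the continuous Chebyshev function $\psi_C$ via $\psi_{\MP}(x)=\psi_C(x)+O(x^{1/2}\log x)$ and works with the closed-form integral representation \eqref{eq: psi_{C}}, which is a finite sum $\sum_{k\le K} I_k(x)$ whose terms vanish for $k>K$ because of the compact support of the weight $g$. Thanks to the doubly-exponential spacing $\gamma_k=\e^{4^k}$ there is exactly one "relevant" term $I_{K-j}$, the terms with $k<K-j$ are bounded by a rapidly convergent geometric series $\sum x^{-4^{-k}}$, and the terms with $K-j<k\le K$ are $\ll x/\gamma_k$, all of which are estimated using the explicit bounds \eqref{eq: bound g}--\eqref{eq: bound g'} on $g$ rather than any zero-density input. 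These explicit numerical constants are what force the error below $0.95|M|$ and dictate the threshold $\lambda\ge 4/5$; your claim that $\lambda>3/4$ would suffice is an artefact of the incorrect bulk bound.
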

The restriction $\lambda \geq 4/5$ is inconsequential as the validity of Proposition \ref{prop: DMV-example} for $4/5 \leq \lambda  < 1$ implies the one for the range $0 \leq \lambda < 1$.
  We postpone the proof of this proposition to Section \ref{secfrd:dmvex} as it requires quite a bit of extra notation.

In view of this proposition, the PNT on short intervals is out of reach for general systems with well-behaved integers, although a \emph{Chebyshev bound in short intervals} might still be attainable, that is, $\psi(x+h)-\psi(x) \asymp h$. On the other hand, if the zeta function admits a zero-free region of Littlewood type, then it is possible to obtain the PNT in short intervals.
\begin{theorem}
\label{th: PNT in short intervals}
Suppose $(\MP, \MN)$ is a Beurling generalized number system with $\theta$-well-behaved integers.
\begin{enumerate}
	\item If $\zeta(s)$ has no zeros for 
	\[	\sigma > 1- d_{1}\frac{\log_{2} t}{\log t}, \quad d_{1}>0, \quad t> t_{0}\ge\e^{2}, \]
	and if for some $b>\theta$ we have the zero-density estimate
	\[	N(\alpha,T) \ll T^{c(1-\alpha)}(\log T)^{L}, \quad c>0, \quad L>0, \quad T>2,\quad b\le \alpha \le 1,\]
	then $\psi(x+h)-\psi(x)\sim h$ for $h\gg x^{\lambda}$, $x\to\infty$ if 
	\[	\lambda > \max\Bigl\{b, 1 - \frac{d_{1}}{cd_{1}+L}\Bigr\}. \]
	
	\item If $\zeta(s)$ has no zeros for 
	\[	\sigma > 1- \frac{d_{2}}{\log t}, \quad d_{2}>0, \quad t>t_{0}\ge2, \]
	and if for some $b>\theta$ we have the zero-density estimate
	\[	N(\alpha,T) \le KT^{c(1-\alpha)}, \quad c>0, \quad K>0, \quad T>2,\quad b\le \alpha \le 1,\]
	then $\psi(x+h)-\psi(x)\asymp h$ for $h\gg x^{\lambda}$, $x\to\infty$ if 
	\[	\lambda > \max\Bigl\{b, 1 - \frac{d_{2}}{cd_{2}+\max\{\log 2K, cd_{2}\}}\Bigr\}. \]
\end{enumerate}
\end{theorem}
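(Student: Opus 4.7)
My plan is to invoke R\'ev\'esz's explicit formula (Theorem~\ref{th: explicit formula}) with truncation height $T = x^{\tau}$ for some $\tau$ slightly exceeding $1-\lambda$, and to control the resulting sum over zeros by combining the hypothesized zero-free region (which caps the real parts of nearby zeros) with the zero-density estimate (applied via Riemann--Stieltjes integration by parts against $N(\alpha,T)$). Fix $\lambda$ strictly above the asserted threshold and let $h \gg x^{\lambda}$. Choose $\tau$ with $1-\lambda + 3\log_{2}x/\log x < \tau < 1/c$, so that Theorem~\ref{th: explicit formula} applied with $T = x^{\tau}$ reads
\[
	\psi(x+h) - \psi(x) = h - S + o(h), \qquad S = \sum_{\substack{\beta \ge b\\ \abs{\gamma}\le T}}\frac{(x+h)^{\rho}-x^{\rho}}{\rho},
\]
as the error term $(x^{b}+x/T)(\log x)^{3}$ is $o(h)$ by the hypotheses $\lambda > b$ and $\tau > 1-\lambda$ (with a margin to absorb $(\log x)^{3}$).

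For each individual summand the elementary bound $\abs{(x+h)^{\rho}-x^{\rho}}/\abs{\rho} \le hx^{\beta-1}$ yields $\abs{S} \le h\sum_{\rho}x^{\beta-1}$. The zero-free region restricts the real parts to $b \le \beta \le 1-\eta(T)$, with $\eta(T) = d_{1}\log_{2}T/\log T$ in case (i) and $\eta(T) = d_{2}/\log T$ in case (ii). Riemann--Stieltjes integration by parts gives
\[
	\sum_{\rho}x^{\beta-1} = x^{b-1}N(b,T) + (\log x)\int_{b}^{1-\eta(T)} x^{\alpha-1}N(\alpha,T)\dif\alpha,
\]
the boundary contribution at $\alpha = 1-\eta(T)$ vanishing by the zero-free region. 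Inserting the density bound and, thanks to $\tau < 1/c$, dominating the integral by its upper-endpoint value $(x/T^{c})^{1-\eta(T)}/\log(x/T^{c})$, I would arrive at
\[
	\abs{S} \ll h(\log T)^{L}\frac{\log x}{\log(x/T^{c})}\biggl(\frac{x}{T^{c}}\biggr)^{-\eta(T)},
\]
the boundary term $x^{b-1}N(b,T)\cdot h$ being readily absorbed since it is $h\cdot O(x^{-(1-b)(1-c\tau)}(\log x)^{L})$.

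In case (i), substituting $T = x^{\tau}$ converts $(x/T^{c})^{-\eta(T)}$ into $(\log x)^{-(1-c\tau)d_{1}/\tau}$ up to constants, so $\abs{S}\ll h(\log x)^{L - (1-c\tau)d_{1}/\tau}$, which is $o(h)$ exactly when $\tau < d_{1}/(cd_{1}+L)$; coupling this with $\tau > 1-\lambda$ gives precisely the stated condition $\lambda > 1 - d_{1}/(cd_{1}+L)$. In case (ii), $(x/T^{c})^{-\eta(T)} = e^{-(1-c\tau)d_{2}/\tau}$, whence $\abs{S} \ll Kh(1-c\tau)^{-1}e^{-(1-c\tau)d_{2}/\tau}$; to ensure $\abs{S} \le (1-\delta)h$ for some fixed $\delta > 0$, and hence $\psi(x+h)-\psi(x)\asymp h$, I would enforce both $\tau(cd_{2}+\log 2K) < d_{2}$ (so that $Ke^{-(1-c\tau)d_{2}/\tau} < 1/2$) and $\tau \le 1/(2c)$ (so that $(1-c\tau)^{-1} \le 2$). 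The second constraint becomes binding precisely when $\log 2K < cd_{2}$, which is exactly why $\max\{\log 2K, cd_{2}\}$ appears in the theorem.

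The main technical obstacle I anticipate is the careful bookkeeping of logarithmic factors, so that the infinitesimal overshoot $\tau > 1-\lambda$ required to swallow the $(\log x)^{3}$ error of the explicit formula does not spoil the threshold in the limit $x\to\infty$ (it does not, since the overshoot $3\log_{2}x/\log x$ tends to $0$), and, in case (ii), to verify that the constant $K$ and the factor $(1-c\tau)^{-1}$ combine to produce the exact expression $d_{2}/(cd_{2}+\max\{\log 2K,cd_{2}\})$ rather than a slightly weaker one.
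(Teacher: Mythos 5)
Your proof is correct and follows essentially the same route as the paper: apply R\'ev\'esz's explicit formula with truncation $T=x^{a}$, bound the zero sum by $h\sum_{\rho}x^{\beta-1}$, integrate by parts against $N(\alpha,T)$, and estimate the resulting integral by combining the density bound with the zero-free region. The one cosmetic difference is that you let the truncation exponent $\tau$ drift with $x$ via the term $3\log_{2}x/\log x$; this is unnecessary (and, as stated, only gives $x^{1-\tau}(\log x)^{3}=O(x^{\lambda})$ rather than $o(h)$ — one would need a slightly larger coefficient). The paper avoids this entirely by fixing $a$ once and for all, strictly between $1-\lambda$ and the threshold $d_{1}/(cd_{1}+L)$ (resp.\ $d_{2}/(cd_{2}+\max\{\log 2K, cd_{2}\})$); the fixed positive gap $a-(1-\lambda)$ already absorbs the $(\log x)^{3}$ factor.
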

The classical proof (see e.g.\ \cite[Theorem 1]{Ingham37} or \cite[Theorem 10.5]{IwaniecKowalski}) goes through without difficulty. We include it here for convenience of the reader.
\begin{proof}
Given $b$ and $\lambda$ as in the statement of the theorem, suppose that $h\gg x^{\lambda}$. We apply Theorem \ref{th: explicit formula} with the given $b$ and with $T=x^{a}$ for some $a$ with $1-a<\lambda$ to get
\[
	\frac{\psi(x+h)-\psi(x)}{h} = 1 -\frac{1}{h}\sum_{\substack{\beta\ge b\\ \abs{\gamma}\le x^{a}}}\frac{(x+h)^{\rho}-x^{\rho}}{\rho} + o(1).
\]
Now 
\begin{align}
	\abs[4]{\frac{1}{h}\sum_{\substack{\beta\ge b\\ \abs{\gamma}\le x^{a}}}\frac{(x+h)^{\rho}-x^{\rho}}{\rho}} 	
		&\le \sum_{\substack{\beta\ge b\\ \abs{\gamma}\le x^{a}}}x^{\beta-1} = -\int_{b}^{1}x^{\sigma-1}\mathrm{d}_{\sigma}N(\sigma,x^{a}) \nonumber\\
		&= x^{b-1}N(b,x^{a}) + \int_{b}^{1}(\log x)x^{\sigma-1}N(\sigma, x^{a})\dif\sigma.  \label{estimate from zeroes}
\end{align}
In the first case, the integral is, for $ac < 1$,
\[
	\ll \int_{b}^{1-d_{1}\log_{2}x^{a}/\log x^{a}}(\log x)^{L+1}x^{(1-ac)(\sigma-1)}\dif \sigma \ll (\log x)^{L}x^{-(1-ac)d_{1}\frac{\log_{2}x^{a}}{\log x^{a}}} \ll (\log x)^{L-(1-ac)d_{1}/a}.
\]
If we now take $a$ such that $1-\lambda < a < d_{1}(cd_{1}+L)^{-1}$, then this is $o(1)$. Since the first term in \eqref{estimate from zeroes} is also $o(1)$, it follows that $\psi(x+h)-\psi(x)\sim h$.

In the second case, the integral is 
\[
	\le K\int_{b}^{1-d_{2}/\log x^{a}}(\log x) x^{(1-ac)(\sigma-1)}\dif \sigma \le \frac{K}{1-ac}x^{-(1-ac)d_{2}/\log x^{a}} = \frac{K \e^{-(1-ac)d_{2}/a}}{1-ac}.
\]
Selecting $a$ such that $1-\lambda < a < d_{2}(cd_{2}+\max\{\log 2K, cd_{2}\})^{-1}$, we see that the integral is $<1$. Since the other terms are $o(1)$, we obtain that $\psi(x+h)-\psi(x) \asymp h$ for $x$ sufficiently large.
\end{proof}

Theorem \ref{th: zero-density} yields a zero-density estimate with $c=\frac{4+\eps}{1-\theta}$ and $L=9$; so we have the following corollary.
\begin{corollary}
If $(\MP, \MN)$ is a Beurling generalized number system with $\theta$-well-behaved integers for which $\zeta(s)$ has no zeros for 
\[	
	\sigma > 1- d\frac{\log_{2} t}{\log t}, \quad d>0, \quad t> t_{0}\ge\e^{2}, 
\]
then $\psi(x+h)-\psi(x) \sim h$ for $h\gg x^{\lambda}$, $x\to\infty$ if 
\[	
	\lambda > 1- \frac{(1-\theta)d}{4d+9(1-\theta)}.
\]
If $d$ can be taken to be arbitrarily large (by modifying $t_0$ accordingly), then it holds in the range $\lambda > (\theta+3)/4$.
\end{corollary}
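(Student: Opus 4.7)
The plan is to feed our zero-density estimate (Theorem \ref{th: zero-density}) directly into part (1) of Theorem \ref{th: PNT in short intervals}. First I would observe that the function $c(\mu)$ defined in \eqref{eq: c(mu)} is increasing on $[2/3,1]$ with $c(1)=4$, hence $c(\mu)\le 4$ throughout. Since $\alpha=(1-\mu)\theta+\mu$ gives $1-\mu=(1-\alpha)/(1-\theta)$, Theorem \ref{th: zero-density} can be rewritten as: for every $\eps'>0$ and every $b>(\theta+2)/3$ there is $T_{0}$ such that
\[
    N(\alpha,T) \ll T^{\frac{4+\eps'}{1-\theta}(1-\alpha)}(\log T)^{9},\qquad T\ge T_{0},\ b\le\alpha\le 1.
\]
(For $\alpha$ in the narrow sliver $b\le\alpha<(\theta+2)/3+\delta$ one could of course just invoke the bound at a slightly smaller fixed $\alpha_{0}>(\theta+2)/3$ and monotonicity of $N(\alpha,T)$ in $\alpha$ to absorb the endpoint issue.) This is exactly the hypothesis of Theorem \ref{th: PNT in short intervals}(1) with $c=(4+\eps')/(1-\theta)$ and $L=9$.

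Next I would apply Theorem \ref{th: PNT in short intervals}(1) with the zero-free region parameter $d_{1}=d$ and the zero-density parameters just obtained. The theorem yields $\psi(x+h)-\psi(x)\sim h$ for $h\gg x^{\lambda}$ as soon as
\[
    \lambda > \max\Bigl\{b,\ 1-\frac{d}{cd+L}\Bigr\}
    = \max\Bigl\{b,\ 1-\frac{(1-\theta)d}{(4+\eps')d+9(1-\theta)}\Bigr\}.
\]
Letting $\eps'\to 0^{+}$ and $b\to ((\theta+2)/3)^{+}$ and comparing the two quantities in the maximum, I would verify that
\[
    1-\frac{(1-\theta)d}{4d+9(1-\theta)} \;\ge\; \frac{\theta+2}{3}
    \;\Longleftrightarrow\; \frac{d}{4d+9(1-\theta)}\le\frac{1}{3}
    \;\Longleftrightarrow\; 0\le d+9(1-\theta),
\]
which is automatic. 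Hence the second term in the maximum always dominates, and the first assertion follows.

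For the final clause, I would simply let $d\to\infty$ in the bound $\lambda>1-(1-\theta)d/(4d+9(1-\theta))$; the right-hand side tends to $1-(1-\theta)/4=(\theta+3)/4$, which indeed exceeds the effective range bound $(\theta+2)/3$ since $(\theta+3)/4-(\theta+2)/3=(1-\theta)/12>0$. Thus for any prescribed $\lambda>(\theta+3)/4$ one can choose $d$ (and accordingly $t_{0}$ and $\eps'$) so that the hypotheses of Theorem \ref{th: PNT in short intervals}(1) hold with the target $\lambda$ in the admissible range, giving $\psi(x+h)-\psi(x)\sim h$ for $h\gg x^{\lambda}$. There is no real obstacle in this argument; the only mild subtlety is bookkeeping the order of the limits in $\eps'$, $b$ and $\delta$ from Theorem \ref{th: zero-density} so that the uniformity statement of Theorem \ref{th: PNT in short intervals}(1) can be applied with fixed parameters $c,L,b$.
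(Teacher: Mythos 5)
Your proposal is correct and takes essentially the same approach as the paper: the paper simply observes that Theorem \ref{th: zero-density} gives the zero-density input with $c=(4+\eps)/(1-\theta)$ and $L=9$ and then invokes Theorem \ref{th: PNT in short intervals}(1), and you spell out the same substitution with careful bookkeeping of the maximum comparison and the $\eps', b, \delta$ limits.
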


In the proof of the second point of Theorem \ref{th: PNT in short intervals}, it is crucial that the estimate $N(\alpha,T) \le KT^{c(1-\alpha)}$ holds all the way up to $\alpha = 1- d_{2}/\log T$. If one starts from an estimate $N(\alpha, T) \ll T^{c(1-\alpha)}(\log T)^{L}$, then this estimate can be made ``log-free'' by enlarging the value of $c$, but only in the range $1-\alpha \gg \frac{\log_{2}T}{\log T}$, which is insufficient for proving the second part of the theorem. 

One can wonder if number systems with well-behaved integers always satisfy 
\begin{equation}
\label{eq: log-free}
	N(\alpha,T) \le K T^{c(1-\alpha)}
\end{equation}
for some $K>0$ and $c>0$. When for example $1-\alpha = d/\log T$, this yields
\[
	N\Bigl(1-\frac{d}{\log T}, T\Bigr) \le K\e^{cd},
\]
an upper bound independent of $T$ and this appears to be quite difficult to achieve in general. However, the example of Diamond, Montgomery, and Vorhauer satisfies \eqref{eq: log-free}. Also, we were unable to find a simple modification of their example breaking \eqref{eq: log-free}. Since their construction is in some sense extremal with respect to the distribution of zeros, one might be tempted to conjecture that \eqref{eq: log-free} holds for any number system with well-behaved integers.

On a related note one can also immediately ask the question whether any number system having well-behaved integers admits Chebyshev bounds in short intervals: does $\psi(x+h) - \psi(x) \asymp h$ when $h\gg x^{\lambda}$ always hold for some $\lambda<1$?

\section{Proof of Proposition \ref{prop: DMV-example}}
\label{secfrd:dmvex}

The construction of Diamond, Montgomery, and Vorhauer is based on a \emph{continuous} number system $(\Pi_{C}(x), N_{C}(x))$: a pair of (absolutely) continuous functions supported on $[1, \infty)$, which are non-decreasing, satisfy $\Pi_{C}(1)=0$, $N_{C}(1)=1$, and are linked via the identity
\[
	\zeta_{C}(s) \coloneqq \int_{1^{-}}^{\infty}u^{-s}\dif N_{C}(u) = \exp\int_{1}^{\infty}u^{-s}\dif \Pi_{C}(u)
\]
(or equivalently, $\dif N_{C}(u) = \exp^{\ast}(\dif\Pi_{C}(u))$, where the exponential is taken with respect to the multiplicative convolution of measures (see \cite[Chapters 2--3]{DiamondZhangbook})).
The function $N_{C}(x)$ plays the role of an \emph{integer-counting function}, and $\Pi_{C}(x)$ that of a Riemann-type \emph{prime-counting function}.

The actual discrete example $(\MP,\MN)$ is a suitable ``approximation'' of this continuous system, whose existence is guaranteed by an ingenious probabilistic method. Since in particular
\[
	\psi_{\MP}(x) = \psi_{C}(x) + O(x^{1/2}\log x), \quad \text{where } \psi_{C}(x) \coloneqq \int_{1}^{x}\log u \dif\Pi_{C}(u),
\]
it suffices to show that $\psi_{C}$ does not admit the PNT in short intervals. Let us first briefly describe the relevant notions of this continuous system. We adopt the notations from the book of Diamond and Zhang \cite[Sections 17.4--17.9]{DiamondZhangbook}, where a slightly more streamlined version of the construction from \cite{DiamondMontgomeryVorhauer} is presented. For $k= 1, 2, \dotsc$, they set
\[
	\ell_{k} = 4^{k}, \quad \gamma_{k} = \e^{\ell_{k}}, \quad \beta_{k} = 1-\frac{1}{\ell_{k}}, \quad \rho_{k} = \beta_{k}+\I\gamma_{k}.
\]
The zeta function is defined as
\[
	\zeta_{C}(s) \coloneqq \frac{s}{s-1}\prod_{k=1}^{\infty}G(\ell_{k}(s-\rho_{k}))G(\ell_{k}(s-\overline{\rho}_{k})), \quad \text{where } G(z) = 1-\frac{\e^{-z}-\e^{-2z}}{z}.
\]
This defines a function holomorphic for $\sigma>1/2$ except for a simple pole at $s=1$, and admitting zeros at $s=\rho_{k}$ and $s=\overline{\rho}_{k}$, situated on the curve $\sigma = 1- \frac{1}{\log\,\abs{t}}$. Actually $\zeta_{C}(s)$ has much more zeros, but the others lie to the left of this curve. 

The \emph{Chebyshev function} $\psi_{C}(x)$ is given by
\begin{align}
	\psi_{C}(x) 	&= \int_{1}^{x}(1-u^{-1})\dif u - 2\sum_{k=1}^{\infty}\int_{1}^{x}(\log v)\frac{1}{\ell_{k}}g(v^{1/l_{k}})v^{\beta_{k}-1}\cos(\gamma_{k}\log v)\dif v \nonumber\\
				&= x -2 \sum_{k=1}^{\infty}I_{k}(x) + O(\log x). \label{eq: psi_{C}}
\end{align}
Here $g(u)$ is a non-negative function supported on $[\e,\infty)$ which is a polynomial in $\log u$ of degree at most $m-1$ for $\e^{m}< u < \e^{m+1}$. It satisfies \cite[Lemma 17.18]{DiamondZhangbook}
\begin{align}
	g(u)\log u 			< 7/3 \quad &\text{for $u\ge \e^{4}$}, \label{eq: bound g 1} \\
	\abs{g(u)\log u -1} 	< 2.7u^{-0.22} \quad &\text{for $u\ge \e^{5}$}, \label{eq: bound g 2}\\
	(g(u)\log u)' 		\ll u^{-1.22} \quad &\text{for $u\ge \e^{5}$}. \label{eq: bound g'}
\end{align}
The exact definition of $g$ is not important for our purposes, but it is worth noting that 
\[
	G(z) = \exp\biggl(-\int_{1}^{\infty}g(u)u^{-z-1}\dif u\biggr).
\]
(The interested reader may consult  \cite[Sections 17.4--17.9]{DiamondZhangbook} for more background information.) If we let $K$ be such that $\gamma_{K} \le x < \gamma_{K+1}$, then the sum in \eqref{eq: psi_{C}} goes only up to $K$, since the terms $I_{k}$ with $k>K$ are $0$ in view of the support of $g$.

\medskip

Let now $\lambda \in [4/5, 1)$, and let $j$ be such that $1-\lambda \in (4^{-j-1}, 4^{-j}]$. We will set $x = x_{K} = B\exp(4^{\kappa})$, where $1\le B = B_{K} \ll 1$ will be chosen later and
\begin{equation}
\label{eq: kappa}
	\kappa = K-j + \frac{1}{\log 4}\log\frac{1}{1-\lambda} \in [K, K+1).
\end{equation}
Once $B$ and hence $x$ are determined, we choose $h= h_{K} \asymp x^{\lambda}$ such that $\psi_{C}(x+h) -\psi_{C}(x) \not\sim h$. By definition of $\kappa$, $\gamma_{K} \le x < x+h < \gamma_{K+1}$ for sufficiently large $K$ when $h\asymp x^{\lambda}$. There holds
\begin{equation}
\label{eq: J_{k}}
	\frac{\psi_{C}(x+h)-\psi_{C}(x)}{h} = 1 - \frac{2}{h}\sum_{k=1}^{K}\int_{x}^{x+h}(\log v)4^{-k}g(v^{4^{-k}})v^{-4^{-k}}\cos(\gamma_{k}\log v)\dif v + o(1).
\end{equation}
We show the term with $k=K-j$ is responsible for the largest contribution. The contribution from the terms with $k<K-j$ is, in view of \eqref{eq: bound g 1}, bounded as
\[
	\abs[3]{\frac{2}{h}\sum_{k=1}^{K-j-1}J_{k} } \le \frac{14}{3}\sum_{k=1}^{K-j-1}x^{-4^{-k}}.
\]
Here, the $J_{k}$ are the integrals appearing in the right hand side of \eqref{eq: J_{k}}. Using the fact that $g(u)$ is a piecewise polynomial in $\log u$ and integrating by parts, one may show (as in \cite[p. 224]{DiamondZhangbook}) that 
\[
	J_{K} \ll \frac{x}{\gamma_{K}}, \quad J_{K-1} \ll \frac{x}{\gamma_{K-1}}, \quad \dotsc \quad J_{K-j+1} \ll_{j} \frac{x}{\gamma_{K-j+1}}.
\]
Finally we consider the term $J_{K-j}$. We change variables and integrate by parts to find
\begin{align}
	&\int_{x}^{x+h}(\log v)4^{-K+j}g(v^{4^{-K+j}})v^{-4^{-K+j}}\cos(\gamma_{K-j}\log v)\dif v \nonumber\\
					& = \frac{1}{\gamma_{K-j}}\left[u^{4^{K-j}-1}\sin(4^{K-j}\gamma_{K-j}\log u)g(u)\log u\right]^{(x+h)^{4^{-(K-j)}}}_{x^{4^{-(K-j)}}} \nonumber\\
					& \quad  - \frac{1}{\gamma_{K-j}}\int_{x^{4^{-(K-j)}}}^{(x+h)^{4^{-(K-j)}}}\bigl(u^{4^{K-j}-1}g(u)\log u\bigr)'\sin(4^{K-j}\gamma_{K-j}\log u)\dif u. \label{eq: I_{K-j}}
\end{align}
Note that the choice \eqref{eq: kappa} of $\kappa$ implies $x/\gamma_{K-j} = B^{1-\lambda}x^{\lambda}$ and $x^{4^{-(K-j)}} = B^{4^{-(K-j)}}\exp\bigl(\frac{1}{1-\lambda}\bigr) \ge \e^{5}$. Let us first estimate the integral in \eqref{eq: I_{K-j}}. We have in view of \eqref{eq: bound g'} that 
\[
	\frac{1}{\gamma_{K-j}}\int_{x^{4^{-(K-j)}}}^{(x+h)^{4^{-(K-j)}}}u^{4^{K-j}-1}\bigl(g(u)\log u\bigr)'\sin(4^{K-j}\gamma_{K-j}\log u)\dif u \ll \frac{h x^{-1.22\cdot4^{-(K-j)}}}{\gamma_{K-j}4^{K-j}} = o(h).
\]
For the integral
\[
	\frac{1}{\gamma_{K-j}}\int_{x^{4^{-(K-j)}}}^{(x+h)^{4^{-(K-j)}}}(4^{K-j}-1)u^{4^{K-j}-2}g(u)\log u\sin(4^{K-j}\gamma_{K-j}\log u)\dif u,
\]
we write $g(u)\log u = 1 + (g(u)\log u -1)$. Integrating by parts again gives
\[
	\frac{1}{\gamma_{K-j}}\int_{x^{4^{-(K-j)}}}^{(x+h)^{4^{-(K-j)}}}(4^{K-j}-1)u^{4^{K-j}-2}\sin(4^{K-j}\gamma_{K-j}\log u)\dif u \ll \frac{x}{\gamma_{K-j}^{2}} \asymp \frac{x^{\lambda}}{x^{1-\lambda}} = o(h).
\]
Also, by \eqref{eq: bound g 2}
\[
	\frac{1}{\gamma_{K-j}}\int_{x^{4^{-(K-j)}}}^{(x+h)^{4^{-(K-j)}}}(4^{K-j}-1)u^{4^{K-j}-2}\bigl(g(u)\log u-1\bigr)\sin(\dotso)\dif u \ll \frac{h}{\gamma_{K-j}} = o(h).
\]
We now return to the main term $[\dots]$ in \eqref{eq: I_{K-j}}. Replacing $g(u)\log u$ by $1$, this main term becomes
\[
	M\coloneqq \frac{1}{\gamma_{K-j}} \Bigl\{(x+h)^{1-4^{-(K-j)}}\sin(\gamma_{K-j}\log(x+h)) - x^{1-4^{-(K-j)}}\sin(\gamma_{K-j}\log x)\Bigr\}.
\]
Consider the term $\sin(\gamma_{K-j}\log x) = \sin(\gamma_{K-j}4^{\kappa} + (\log B)\gamma_{K-j})$. For sufficiently large $K$ we choose $1\le B = B_{K} \le 1.09$ such that this is $1$ or $-1$, if $K$ is even or odd, respectively. We approximate $\sin(\gamma_{K-j}\log(x+h))$ as 
\[
	\sin\Bigl\{\gamma_{K-j}\Bigl(\log x + h/x + O\bigl((h/x)^{2}\bigr)\Bigr)\Bigr\}.
\]
Since $\gamma_{K-j}h/x = h/B^{1-\lambda}x^{\lambda}$, we may choose $h=h_{K} \in  [(\pi/2)x^{\lambda}, 3\pi x^{\lambda}]$ say, such that this sine is $-1$ or $1$, if $K$ is even or odd, respectively. In any case we have 
\[
	\sgn M = (-1)^{K+1}, \quad \abs{M} \ge \frac{2x^{1-4^{-(K-j)}}}{\gamma_{K-j}} = 2B^{1-\lambda-4^{-(K-j)}}x^{\lambda}\exp\Bigl(-\frac{1}{1-\lambda}\Bigr)\ge 2x^{\lambda}\exp\Bigl(-\frac{1}{1-\lambda}\Bigr),
\] 
if $K$ is sufficiently large. By \eqref{eq: bound g 2}, the error introduced by replacing the main term $[\dotso]$ in \eqref{eq: I_{K-j}} by $M$ is at most
\[
	\le \frac{2.7}{\gamma_{K-j}}\Bigl((x+h)^{1-1.22\cdot4^{-(K-j)}} + x^{1-1.22\cdot4^{-(K-j)}}\Bigr) \le 5.5x^{\lambda}\exp\Bigl(-\frac{1.22}{1-\lambda}\Bigr),
\]
if $K,h,x$ are sufficiently large and where we used $5.4B^{1-\lambda} < 5.5$. If $\lambda \ge 4/5$, then $5.5\exp\bigl(-\frac{0.22}{1-\lambda}\bigr) \le 1.9$, so that the above error is $\le 0.95\abs{M}$.
Collecting all estimates, we conclude
\[
	\abs{\frac{\psi_{C}(x+h)-\psi_{C}(x)}{h} -1} \ge 0.2\frac{x^{\lambda}}{h}\exp\Bigl(-\frac{1}{1-\lambda}\Bigr) - \frac{14}{3}\sum_{k=1}^{K-j-1}x^{-4^{-k}} + O_{j}\Bigl(\frac{x}{h\gamma_{K-j+1}}\Bigr) +o(1).
\]
The sum is 
\[
	\frac{14}{3}\biggl(\exp\Bigl(-\frac{4}{1-\lambda}\Bigr) + \exp\Bigl(-\frac{16}{1-\lambda}\Bigr) + \dotso\biggr) \le 5\exp\Bigl(-\frac{4}{1-\lambda}\Bigr) \le 10^{-5}\exp\Bigl(-\frac{1}{1-\lambda}\Bigr),
\]
since $\lambda\ge4/5$. Finally, in view of \eqref{eq: kappa}, the $O_{j}$-term is 
\[
	\frac{x}{h\gamma_{K-j+1}}  = \frac{B^{4(1-\lambda)}x^{\lambda}}{hx^{3(1-\lambda)}} = o(1).
\]
This concludes the proof of Proposition \ref{prop: DMV-example}. Since the signs of the main terms alternate we have shown in particular  
\begin{align*}
	\limsup_{K\to \infty}\frac{\psi(x_{K}+h_{K})-\psi(x_{K})}{h_{K}} 	&> 1 + 0.02\exp\Bigl(-\frac{1}{1-\lambda}\Bigr),\\
	\liminf_{K\to \infty}\frac{\psi(x_{K}+h_{K})-\psi(x_{K})}{h_{K}} 	&< 1-0.02\exp\Bigl(-\frac{1}{1-\lambda}\Bigr).
\end{align*}

\appendix

\section{Sharpness of the zero-density theorem} \label{sec: appendixexample}

In \cite{ReveszCarlson} it was announced that during a visit in Budapest in July 2022, we had come up with a family of Beurling systems displaying, in suitable ranges for $\sigma$, $N(\sigma, T) \gg T^{c(1-\sigma)}$,
 for a suitable $c > 0$. This demonstrates that in the general setting of $\theta$-well behaved systems, one can (apart from the precise value of the constant $c$) not obtain better than Carlson-type zero-density estimates. Afterwards we realized that this was already observed in the pioneering paper \cite{DiamondMontgomeryVorhauer}; their example $\mathcal{P}_{\theta,\eps}$, $1/2 < \theta < 1$, $\eps > 0$ has $\theta$-well behaved integers while its zeta function admits
 \[
 	N(\sigma, T_n) \gg_{\eps} T_n^{\frac{(1-\sigma)(1-\eps)}{1-\theta}}, \quad \theta + \eps \leq \sigma \leq 1 - \frac{d}{ \log T_n},
 \]
on a sequence $T_n \rightarrow \infty$ and for a suitable $d$. Our example has only the mild advantage that the estimate on the zeta zeros holds for all sufficiently large $T$ and not only on a subsequence. 
It also provides a lower bound for the number of zeros a Beurling zeta function can have on a line $\sigma=\beta$, where $\theta<\beta<1$. For the purpose of completeness we include a brief sketch of our example in this appendix. 
 
We shall provide a Beurling system $\mathcal{P}_{\beta,\theta,\varepsilon}$, $1/2 < \theta < \beta < 1$, $\eps > 0$ having the following properties:
\begin{enumerate}
 \item \label{prfd:zerobeta} there exists $T_0 = T_0(\beta,\eps)$ 
 such that $N(\beta, T) \gg T^{\frac{(1-\beta)(1-\eps)}{1+\beta - 2 \theta}}$ for $T \geq T_0$ and $\zeta$ has no zeros on the half-plane $\Re s > \beta$,
 \item \label{prfd:zerofull}$N(\sigma, T) \gg T^{\frac{(1+\beta- 2 \sigma)(1-\eps)}{1+\beta - 2\theta}}$, for $\theta < \sigma < \beta$ and $T \geq T_0$, 
 \item \label{prfd:intcou} there exists $A=A(\beta,\eps) > 0$ such that $N(x) = A x + O_{\eps,\beta}\bigl(x^{\theta} \exp(C\sqrt{\log x})\bigr)$, for a suitable constant $C>0$.
\end{enumerate}

Our example draws inspiration from the Diamond--Montgomery--Vorhauer template. We first define a continuous system through its zeta function and then extract a discrete system via a probabilistic approximation procedure. As in Section \ref{secfrd:dmvex} we define the zeta function as
\begin{equation}
\label{eqfd:defzeta} 
	\zeta(s) = \frac{s}{s-1} \prod_{k = k_0}^{\infty} G(\ell_k(s-\rho_k)) G(\ell_k(s-\overline{\rho}_{k})), \quad \text{where } G(z) = 1 - \frac{\e^{-z} - \e^{-2z}}{z}, 
\end{equation}
for a sufficiently large $k_0$, but now the parameters $\ell_k$ and $\rho_k$ are chosen as
\[
	\rho_k = \beta + \I \gamma_k,\quad \gamma_k = k^{2\alpha(\beta-\theta) + 1 + \varepsilon}, \quad \ell_k = \alpha \log k, \quad \alpha = \frac{1}{1-\beta} + \varepsilon.
\]

The function $G$ admits a zero at $0$ and at \cite[Lemma 2]{DiamondMontgomeryVorhauer}
\[
	z_j = -\frac{1}{2} \log(\pi j) \pm \I\pi(j+1/4) + O(j^{-1/2}), \quad j = 1, 2 , \dotsc .
\]
Therefore $\zeta(s)$ admits zeros at $\rho_k$. As the infinite product converges on the half-plane $\Re s \geq \theta$, we immediately obtain $N(\beta, T) \gg T^{\frac{1}{2\alpha(\beta-\theta) + 1 + \varepsilon}}$ implying property (\ref{prfd:zerobeta}) as there are no zeros to the right of the line $\Re s = \beta$. Taking into consideration the other zeros of $G$, a few calculations show that $N(\sigma, T) \gg T^{\frac{2\alpha(\beta - \sigma) + 1}{2\alpha(\beta - \theta) + 1 + \varepsilon}}$ for $T \geq T_0$ yielding property (\ref{prfd:zerofull}). 

We still need to verify that our definition for $\zeta$ truly defines a valid Beurling system. In other words, is $\zeta(s)$ the Mellin--Stieltjes transform of a measure $\dif N$ that can be written as $\exp^{\ast} (\dif\Pi)$ for a positive measure $\dif\Pi$, or equivalently, is $\log \zeta(s)$ the Mellin--Stieltjes transform of a positive measure $\dif\Pi$? As in Section \ref{secfrd:dmvex} we write $\log G(s) = -\int^{\infty}_{1}g(u) u^{-s-1} \dif u$. One finds after some calculations
\[
	\dif\Pi(u) = \frac{1-u^{-1}}{\log u}\dif u - 2 \sum_{k = k_0}^{\infty} \frac{g(u^{1/\ell_k}) u^{\beta - 1} \cos(\gamma_k \log u)}{\ell_k} \dif u, \quad u\geq 1,
\]
where, for fixed $u$, only finitely many terms are non-zero in view of $\supp g \subseteq [\e,\infty)$. Therefore, by observing that $g(u) \ll 1$ (cf. \eqref{eq: bound g}), inserting the choice for $\ell_k$ and selecting $k_0 = k_0(\eps,\beta)$ sufficiently large, we obtain that $\dif\Pi$ is indeed a positive measure.

It remains only to check property (\ref{prfd:intcou}). As $N$ is non-decreasing, we have \cite[Lemma 17.10]{DiamondZhangbook}, for $\kappa >1$,
\begin{equation}\label{eqfd:perronN} N(x) \leq \frac{1}{2\pi\I} \int^{\kappa + \I\infty}_{\kappa -\I\infty} \zeta(s) \frac{(x+1)^{s+1} - x^{s+1}}{s(s+1)}\dif s.
\end{equation}
We require bounds on $\zeta(s)$ on the half-plane $\Re s \geq \theta$. One may show that $\zeta(s)$ is bounded on this half-plane (outside a neighborhood of the pole at $s = 1$) when $t = \Im s$ is not in one of the intervals $\mathfrak{I}_k := [\gamma_k - (\gamma_k - \gamma_{k-1})/3, \gamma_k + (\gamma_{k+1}-\gamma_k)/3]$; note that all these intervals are disjoint. As an illustration, letting $M$ be such that $\gamma_M < t < \gamma_{M+1}$, we sketch how to bound the factors in \eqref{eqfd:defzeta} from $M+1$ onwards and leave the analysis of the other factors to the reader. We have
\begin{align*}
 \log \left|\prod_{k = M+1}^{\infty} G(\ell_k(s-\rho_k))\right| & \ll \sum_{k = M+1}^{\infty} \frac{\e^{2\ell_k(\beta- \theta)}}{\ell_k |t-\gamma_k|} \ll \sum_{k = M+1}^{\infty} \frac{\e^{2\ell_k(\beta- \theta)}}{ \gamma_k - \gamma_M} \\
 & \ll \sum_{k = M+1}^{\infty} \frac{k^{2\alpha(\beta- \theta)}}{k^{2\alpha(\beta - \theta) +1+\eps} - M^{2\alpha(\beta - \theta) +1+\eps}} \ll \sum_{n = 1}^{\infty} \frac{1}{(M + n)^{1+\eps} - M^{1+\eps}} \\
 & \ll \sum_{n \leq M} \frac{1}{nM^{\eps}} + \sum_{n  > M} \frac{1}{n^{1+\eps}}\ll_\eps 1.
\end{align*} 
On the interval $\mathfrak{I}_k$ one may perform the same analysis to see that the product of all the factors in \eqref{eqfd:defzeta} except $G(\ell_k(s-\rho_k))$ is bounded. This final factor can be bounded by $O(|t|)$. Therefore we are justified to switch the contour in \eqref{eqfd:perronN} to the line $\Re s = \theta$, picking up the residue $A x + O_{\beta,\eps}(1)$ at the point $s = 1$ in the process. Observing that $((x+1)^{s+1}-x^{s+1})/(s+1) \ll \min\{x^{\sigma}, x^{\sigma + 1}/t\}$, we achieve the bound $O_{\eps}(x^\theta \log x)$ for the displaced contour integral \eqref{eqfd:perronN} on the segments of $\Re s = \theta$ where $\zeta(s)$ is bounded. On the intervals $\theta + \I\mathfrak{I}_k$ we find
\begin{align*} 
	\sum_k \int_{\theta + \I\mathfrak{I}_k} \zeta(s) \frac{(x+1)^{s+1} - x^{s+1}}{s(s+1)}\dif s 
		& \ll_{\eps} \sum_k \int_{\mathfrak{I}_k} \frac{\min\{x^{\theta}, x^{\theta + 1}/t\}}{|\theta + \I t|} \biggl(1+\frac{\e^{2\ell_k(\beta - \theta)}}{1+ |t-\gamma_k|}\biggr)\dif t \\
		& \ll \int^{\infty}_{1} \frac{\min\{x^{\theta}, x^{\theta + 1}/t\}}{t}\dif t + \sum_k \frac{x^{\theta}k^{2\alpha(\beta-\theta)}}{\gamma_k} \int^{k^{2\alpha(\beta-\theta)}}_{0} \frac{\dif u}{u +1} \\
		& \ll_{\beta} x^{\theta} \log x + x^{\theta}\sum_k \frac{\log k}{k^{1+\eps}} \ll_{\eps} x^{\theta} \log x.
\end{align*}
In conclusion we obtain $N(x) \leq A x + O_{\eps,\beta}(x^{\theta} \log x)$. The analysis of the lower bound for $N(x)$ is similar and omitted. This concludes the verification of property (\ref{prfd:intcou}) for the continuous system.

Finally, following a suitable probabilistic discretization procedure, e.g \cite[Theorem 1.2]{BrouckeVindas} or \cite[Lemma 17.5]{DiamondZhangbook}, one obtains a \emph{discrete} Beurling system for which its zeta function $\zeta_D(s)$ satisfies $\zeta_D(s) = \zeta(s) F(s)$, where $F(s)$ is an analytic function on the half-plane $\Re s > 1/2 + \eps$ and bounded by $F(s) \ll \exp(C\sqrt{\log t})$ for a suitable absolute constant $C>0$. So, as long as $\theta > 1/2$, the discrete system inherits all the properties analyzed above from the continuous system, possibly with a different value for $A$ and a slightly worse error term in the asymptotics for $N$.

\end{document}